\documentclass[11pt,reqno]{amsart}
\usepackage{amssymb,amsthm,amsmath}
\usepackage{tikz}
\usepackage[margin=2.5cm]{geometry}
\usepackage{url}

\newtheorem{lemma}{Lemma}[section]
\newtheorem{theorem}[lemma]{Theorem}
\newtheorem{proposition}[lemma]{Proposition}

\newtheorem{conjecture}[lemma]{Conjecture}

\theoremstyle{definition}

\newtheorem{question}[lemma]{Question}

\newcommand{\reals}{\mathbb{R}}

\newcommand{\gde}{\ensuremath{G-e}}
\newcommand{\gce}{\ensuremath{G/e}}

\newcommand{\ser}{\mathbin{\oplus_S}}
\newcommand{\pll}{\mathbin{\oplus_P}}

\setlength{\parskip}{10pt}

\title{The Merino--Welsh Conjecture holds for Series--Parallel Graphs}
\author{Steven D. Noble}
\address[Steven D. Noble]{Mathematical Sciences \\ John Crank 106\\ Brunel University \\ Uxbridge UB8 3PH \\ United Kingdom}
\email{Steven.Noble@brunel.ac.uk}

\author{Gordon F. Royle}
\address[Gordon F. Royle]{School of Mathematics \& Statistics \\ University of Western Australia \\ 35 Stirling Highway\\ Nedlands 6009\\Australia}
\email{Gordon.Royle@uwa.edu.au}
\begin{document}

\begin{abstract}
The Merino-Welsh conjecture asserts that the number of spanning trees of a graph is no greater than the maximum
of the numbers of totally cyclic orientations and acyclic orientations of that graph. We prove this conjecture for the
class of series-parallel graphs.
\end{abstract}

\maketitle

\section{Introduction}

In this paper, we are concerned with the relationship between three graph parameters, namely the number of {spanning
trees}, the number of acyclic orientations, and the number of {totally cyclic orientations} of a graph. In principle, our graphs may have loops, bridges and multiple edges, although only the last will play any eventual role.  Recapping some basic terminology, recall that a  {\em spanning tree} of a graph $G$ is a set of edges inducing a connected spanning subgraph of $G$, but containing no cycle of $G$; we use $\tau(G)$ to
denote the number of spanning trees of $G$. An {\em orientation} of a graph is an assignment of a direction to 
each edge of the graph --- the orientation is called {\em acyclic} if the resulting directed graph contains no directed
cycles, and it is called {\em totally cyclic} if {\em every} edge is contained in a directed cycle. We use 
$\alpha(G)$ to denote the number of acyclic orientations of $G$, and $\alpha^*(G)$ to denote the number
of totally cyclic orientations of $G$. It is immediate that if $G$ contains a loop, then $\alpha(G) = 0$ and that if $G$ contains a bridge, then $\alpha^*(G) = 0$. 

Examination of empirical evidence led Merino and Welsh to make the following conjecture relating these
three graphical parameters:
\begin{conjecture}[Merino-Welsh Conjecture \cite{MR1772357}]\label{conj:CMregular}
For any bridgeless, loopless graph $G$,
\begin{equation}\label{eq:CMregular}
\max \{\alpha(G),\alpha^*(G)\} \geq \tau(G).
\end{equation}
\end{conjecture}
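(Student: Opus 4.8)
The plan is to recast all three parameters as evaluations of the Tutte polynomial $T(G;x,y)$, using the standard identities $\tau(G)=T(G;1,1)$, $\alpha(G)=T(G;2,0)$ and $\alpha^*(G)=T(G;0,2)$. Inequality~\eqref{eq:CMregular} then becomes the purely polynomial assertion $\max\{T(G;2,0),T(G;0,2)\}\geq T(G;1,1)$, and the feature I would try to exploit is that $T(G;x,y)$ has nonnegative coefficients and obeys a single deletion--contraction recursion that controls all three evaluation points at once.

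The engine would be induction on $\abs{E(G)}$ via deletion--contraction. For any edge $e$ that is neither a loop nor a bridge one has $T(G)=T(G-e)+T(G/e)$ identically in $(x,y)$, so $\tau$, $\alpha$ and $\alpha^*$ all satisfy the same linear recursion. Rather than attack the maximum directly, I would aim at the additive strengthening $\alpha(G)+\alpha^*(G)\geq 2\tau(G)$, which implies~\eqref{eq:CMregular} because $\max\{a,b\}\geq(a+b)/2$, and which is tailor-made for the recursion: summing the two inequalities for $G-e$ and $G/e$ reproduces the inequality for $G$. This additive form is itself a consequence of the multiplicative conjecture $\alpha(G)\,\alpha^*(G)\geq\tau(G)^2$ by the arithmetic--geometric mean inequality, so one may freely work with whichever form survives the reductions best.

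The essential difficulty, and the reason the general conjecture remains open, is that the hypotheses are not preserved by the reduction: deleting $e$ can create a bridge, forcing $\alpha^*=0$, while contracting $e$ can create a loop, forcing $\alpha=0$, and in either case the additive inequality genuinely fails, so the two summands need not satisfy the inductive hypothesis. I would address this by handling parallel classes and series classes in bulk---collapsing a maximal set of parallel edges to a single edge while tracking how the triple $(\tau,\alpha,\alpha^*)$ transforms, and dually for series classes---thereby reducing any graph to a simple, essentially $3$-connected core in which no further series or parallel reduction is available. When $G$ is series--parallel this process terminates at a single edge, so the core case is vacuous and the argument closes; the general case is where the real content lies. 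The main obstacle is exactly this core: a $3$-connected graph has no series--parallel structure to exploit, so one must either exhibit, for every such graph, a single ``good'' edge $e$ for which both $G-e$ and $G/e$ remain bridgeless and loopless (and hence satisfy the inequality inductively), or else establish the scale-invariant multiplicative form directly on $3$-connected graphs, perhaps by decomposing along $2$-separations using the behaviour of the Tutte polynomial under $2$-sums. Proving the existence of a good edge, or the multiplicative inequality, for arbitrary $3$-connected graphs is the step I expect to be the true bottleneck.
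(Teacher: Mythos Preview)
The statement you are addressing is Conjecture~\ref{conj:CMregular}, which the paper does \emph{not} prove in general; it remains open. The paper's contribution is the series--parallel case only. Your proposal correctly identifies the central obstruction---deletion can create bridges and contraction can create loops, so the hypotheses are not inductively preserved---and correctly flags the $3$-connected core as the real bottleneck for the full conjecture. On that much you and the paper agree.

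The gap is in your one-sentence disposal of the series--parallel case: ``When $G$ is series--parallel this process terminates at a single edge, so the core case is vacuous and the argument closes.'' This is precisely where the paper's work lies, and it does not close so easily. Collapsing a parallel class to a single edge, or suppressing a series class, does not transform $(\tau,\alpha,\alpha^*)$ in a way that manifestly preserves either the additive or the multiplicative inequality: for instance, removing a parallel edge leaves $\alpha$ unchanged while both $\tau$ and $\alpha^*$ drop, and there is no a~priori control on the ratios. The paper's actual argument is considerably more elaborate. It first shows (Lemmas~\ref{lem:par} and~\ref{lem:ser}) that in a minimal counterexample every series or parallel class has size $2$ or $3$; it then introduces three auxiliary parameters $\tau_2$, $\alpha_2$, $\alpha_2^*$ which, together with $\tau$, $\alpha$, $\alpha^*$, \emph{do} transform by explicit formulae under series and parallel connection; it defines a notion of one TTSP graph being \emph{replaceable} by a smaller one (Lemma~\ref{showreducible}) in a way that propagates the multiplicative inequality; and it finishes with a finite computer verification that only $19$ irreducible extendable TTSP graphs exist, each of which satisfies the conjecture after adding the terminal edge. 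Your phrase ``tracking how the triple transforms'' is exactly the right instinct, but it conceals all of the genuine content: three parameters are not enough to track, and the reduction does not terminate on its own without the replaceability machinery and the computer check.
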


The $\max(\cdot)$ function is awkward to carry through any sort of inductive proof, but there are two very natural
generalisations of this conjecture involving the arithmetic and geometric means of $\alpha$ and $\alpha^*$, which first
appeared in Conde \& Merino \cite{MR2555382}.

\begin{conjecture}[Additive Merino-Welsh Conjecture] \label{conj:CMadd}
For any bridgeless, loopless graph $G$,
\begin{equation}\label{eq:CMadd}
\alpha(G) + \alpha^*(G) \geq 2\tau(G).
\end{equation}
\end{conjecture}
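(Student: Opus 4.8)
The plan is to translate the inequality into the language of the Tutte polynomial and then attempt a deletion--contraction induction. Writing $T_G(x,y)$ for the Tutte polynomial of $G$, the three quantities of interest are the evaluations $\tau(G)=T_G(1,1)$, $\alpha(G)=T_G(2,0)$ and $\alpha^*(G)=T_G(0,2)$. The crucial structural observation is that the points $(2,0)$, $(1,1)$ and $(0,2)$ are collinear on the line $x+y=2$, with $(1,1)$ the midpoint of the other two. Setting $\phi_G(t)=T_G(1+t,1-t)$, so that $\phi_G(1)=\alpha(G)$, $\phi_G(-1)=\alpha^*(G)$ and $\phi_G(0)=\tau(G)$, the inequality \eqref{eq:CMadd} is precisely the midpoint-convexity statement $\phi_G(1)+\phi_G(-1)\ge 2\phi_G(0)$. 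This makes the duality symmetry $\alpha\leftrightarrow\alpha^*$ manifest (matroid duality $T_{G^*}(x,y)=T_G(y,x)$ acts as $t\mapsto-t$, so $\phi_{G^*}(t)=\phi_G(-t)$) and suggests aiming for the stronger claim that $\phi_G$ is convex on $[-1,1]$, of which \eqref{eq:CMadd} is the midpoint instance.

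For the induction I would introduce the defect $D(G)=\alpha(G)+\alpha^*(G)-2\tau(G)$, extended to \emph{all} graphs by reading the three symbols as the corresponding Tutte evaluations. Choosing an edge $e$ that is neither a loop nor a bridge --- which exists once the trivial base cases (single loops, cycles $C_n$, bonds, and their contractions) are dispatched --- the recursion $T_G=T_{\gde}+T_{\gce}$ evaluates termwise at each of the three points to give
\[
D(G)=D(\gde)+D(\gce),
\]
so the defect is additive across deletion and contraction. The base cases are immediate: for $C_n$ one has $\alpha+\alpha^*=(2^n-2)+2=2^n\ge 2n=2\tau$ for $n\ge 2$, and the $n$-bond is dual, with $D$ invariant under duality.

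The main obstacle is that neither deletion nor contraction preserves the class of bridgeless, loopless graphs: deleting $e$ may create a bridge and contracting $e$ may create a loop, and on such minors the defect can be \emph{negative}. For example $C_3$ with one extra loop has $\alpha=0$, $\alpha^*=4$, $\tau=3$, giving $D=-2$, and dually the $3$-bond with a single pendant bridge also has $D=-2$. Hence one cannot simply assert $D(\gde)\ge 0$ and $D(\gce)\ge 0$ and conclude; the negative contributions of looped and bridged minors must be shown to be dominated, and tracking their signs through the whole lattice of minors is exactly where a naive induction stalls. For series--parallel graphs one escapes the trap, because such a graph is built from single edges by series and parallel compositions (equivalently repeated $2$-sums), along which the three evaluations factorise in a controlled way; this lets one prove the stronger multiplicative inequality $\alpha(G)\,\alpha^*(G)\ge\tau(G)^2$ by induction on the composition and then recover \eqref{eq:CMadd} from the arithmetic--geometric mean inequality. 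No analogous compositional handle exists for a general $3$-connected graph, and since the additive defect is not multiplicative the block product formulae do not let \eqref{eq:CMadd} reduce to its $2$-connected components; I expect this absence of structure in the $3$-connected case to be the genuine difficulty, and it is what leaves Conjecture~\ref{conj:CMadd} open beyond restricted classes such as the series--parallel graphs.
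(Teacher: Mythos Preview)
The statement you were asked to prove is Conjecture~\ref{conj:CMadd}, and in the paper it \emph{remains a conjecture}: no proof is given for general bridgeless, loopless graphs. What the paper actually establishes is the \emph{multiplicative} inequality $\alpha(G)\alpha^*(G)\ge\tau(G)^2$ for the restricted class of series--parallel graphs (Theorem at the end of Section~4), from which the additive inequality~\eqref{eq:CMadd} follows for that class by the AM--GM inequality. So there is no ``paper's own proof'' of Conjecture~\ref{conj:CMadd} to compare against.

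Your proposal is not a proof either, and to your credit you say so explicitly in the final paragraph. What you have written is a correct diagnosis of why the obvious deletion--contraction induction stalls: the defect $D(G)=\alpha(G)+\alpha^*(G)-2\tau(G)$ is indeed additive under $T_G=T_{\gde}+T_{\gce}$, but $\gde$ may acquire bridges and $\gce$ may acquire loops, and on such minors $D$ can go negative (your $C_3$-plus-loop example with $D=-2$ is correct). Your remark that the series--parallel case is handled via the multiplicative inequality and AM--GM is exactly what the paper does, and your observation that the additive defect does not factor over blocks (whereas the multiplicative inequality does) is also accurate and is precisely why the paper targets Conjecture~\ref{conj:CMmult} rather than Conjecture~\ref{conj:CMadd}. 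In short: your write-up is a sound account of the difficulty, but it is not a proof, and none is expected here since the statement is open in general.
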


\begin{conjecture}[Multiplicative Merino-Welsh Conjecture]\label{conj:CMmult}
For any bridgeless, loopless graph $G$,
\begin{equation}\label{eq:CMmult}
\alpha(G) \alpha^*(G) \geq \tau(G)^2. 
\end{equation}
\end{conjecture}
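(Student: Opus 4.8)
The plan is to prove the multiplicative form directly, since it is the strongest of the three. From the elementary chain $\max\{x,y\}\ge\tfrac12(x+y)\ge\sqrt{xy}$ for nonnegative reals, the bound $\alpha(G)\alpha^*(G)\ge\tau(G)^2$ forces $\tfrac12(\alpha(G)+\alpha^*(G))\ge\sqrt{\alpha(G)\alpha^*(G)}\ge\tau(G)$ and hence $\max\{\alpha(G),\alpha^*(G)\}\ge\tau(G)$, so a proof of \eqref{eq:CMmult} immediately yields Conjectures~\ref{conj:CMadd} and~\ref{conj:CMregular}. The three quantities are the Tutte-polynomial evaluations $\tau(G)=T(G;1,1)$, $\alpha(G)=T(G;2,0)$ and $\alpha^*(G)=T(G;0,2)$, so each obeys the deletion--contraction recurrence $f(G)=f(\gde)+f(\gce)$ at any edge $e$ that is neither a loop nor a bridge, and each is multiplicative over the blocks of $G$. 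Since $G$ is loopless and bridgeless, every edge qualifies for the recurrence, and block-multiplicativity lets me assume from the outset that $G$ is $2$-connected: if the inequality holds on each block $B_i$, then taking products of the three parameters preserves it for $G$.

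The engine of the induction is a closure lemma in the reals. Write $a_1=\alpha(\gde)$, $a_2=\alpha(\gce)$, $b_1=\alpha^*(\gde)$, $b_2=\alpha^*(\gce)$, $t_1=\tau(\gde)$, $t_2=\tau(\gce)$, so that $\alpha(G)=a_1+a_2$, $\alpha^*(G)=b_1+b_2$ and $\tau(G)=t_1+t_2$. Assuming inductively that $a_1b_1\ge t_1^2$ and $a_2b_2\ge t_2^2$, I would expand and bound the cross terms by AM--GM:
\[
(a_1+a_2)(b_1+b_2)-(t_1+t_2)^2 \ge (a_1b_2+a_2b_1)-2t_1t_2 \ge 2\sqrt{a_1b_1\,a_2b_2}-2t_1t_2\ge 0,
\]
so $\alpha(G)\alpha^*(G)\ge\tau(G)^2$. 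Thus the inequality propagates up a deletion--contraction step \emph{provided} both $\gde$ and $\gce$ are themselves bridgeless and loopless, so that the inductive hypothesis applies to each.

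The whole difficulty is therefore concentrated in choosing a suitable edge and in handling edges where the reduction leaves the class. Contracting $e$ creates a loop exactly when $e$ has a parallel partner, which kills $\alpha(\gce)$; deleting $e$ creates a bridge exactly when $e$ lies in a $2$-edge-cut (a series pair), which kills $\alpha^*(\gde)$. Call $e$ \emph{good} if it avoids both. In a $3$-connected simple graph every edge is good, since simplicity excludes parallel partners and $3$-connectivity forces edge-connectivity at least $3$ and so excludes $2$-edge-cuts; there the closure lemma applies at every edge and the induction runs unobstructed, recursing into smaller bridgeless, loopless graphs. The remaining graphs have no good edge, and these I would peel off using the series and parallel operations $\ser$ and $\pll$: a nontrivial parallel class or series pair is collapsed to a single edge with a controlled change in $(\alpha,\alpha^*,\tau)$, reducing $G$ until it is either a $3$-connected piece or one of the base cases --- a cycle or a bundle of $k$ parallel edges --- for which $\alpha\alpha^*=2(2^k-2)\ge k^2=\tau^2$ is a direct computation.

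The step I expect to be the genuine obstacle is making the reduction to $3$-connected pieces honest. Stripping series and parallel pairs is really a decomposition of $G$ along its $2$-edge-cuts into $3$-connected components glued by $2$-sums, and the Tutte polynomial's behaviour under a $2$-sum mixes the factors through an auxiliary parameter rather than multiplying them cleanly, so it is not at all clear that the product inequality survives the gluing. Establishing a $2$-sum analogue of the closure lemma --- or, failing that, a direct argument for arbitrary $3$-connected graphs that does not decompose them --- is exactly the point at which the clean induction stops being automatic. It is this $3$-connected/$2$-sum core that must be controlled to reach the fully general statement, as opposed to the series--parallel case, in which no $3$-connected piece ever arises and the reductions $\ser$ and $\pll$ already suffice.
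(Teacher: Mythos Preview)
The statement you set out to prove is an open \emph{conjecture}; the paper does not prove it in general. What the paper establishes is only the special case of series--parallel graphs (the main theorem in Section~4), and you correctly flag this distinction in your final sentence.

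Your closure lemma is correct and equivalent to the paper's Lemmas~\ref{lem:disc}--\ref{lem:parser} (attributed there to Jackson), though you argue via AM--GM where the paper uses the discriminant of the quadratic $\alpha\lambda^{2}-2\tau\lambda+\alpha^{*}$. Your conclusion that a minimal counterexample can have no ``good'' edge---equivalently, that every edge lies in a nontrivial series or parallel class---is exactly the paper's observation following Lemma~\ref{lem:parser}. Up to this point your plan and the paper coincide.

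The genuine gap is the one you yourself name: once every edge lies in a nontrivial series or parallel class, there is no clean inductive step. Your proposed decomposition along $2$-sums into $3$-connected pieces does not close, because (as you say) the Tutte polynomial of a $2$-sum mixes the factors through an auxiliary term, so the product inequality on the pieces does not multiply into one on the whole. This obstacle is precisely why Conjecture~\ref{conj:CMmult} remains open for general graphs; your proposal contains no mechanism to overcome it and is therefore a plan rather than a proof.

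For the series--parallel case, where no $3$-connected piece ever appears, the paper's route diverges substantially from your sketch. Rather than abstract $2$-sum bounds, it introduces three auxiliary two-terminal parameters $\tau_{2}$, $\alpha_{2}$, $\alpha^{*}_{2}$ that track how $\tau$, $\alpha$, $\alpha^{*}$ behave under $\ser$ and $\pll$; it defines a notion of one TTSP graph being \emph{replaceable} by a smaller one (Lemma~\ref{showreducible}); it further constrains a minimal counterexample so that every series or parallel class has size exactly $2$ or $3$ (Lemmas~\ref{lem:par}--\ref{lem:ser}); and it then verifies by a finite computer search that only nineteen irreducible extendable TTSP graphs exist, each of which satisfies the inequality once the terminal edge is restored. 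None of this machinery is present in your proposal, and nothing in your argument substitutes for it.
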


The {\em digon}, which is the 2-vertex graph with a double edge connecting its two vertices has 2 spanning trees, 2
acyclic orientations and 2 totally cyclic orientations and thus achieves equality in all three variants of the conjecture.

Intuitively, for any fixed number of vertices, the (connected) graphs with few edges tend to be ``tree-like'' and have more acyclic orientations than spanning trees, whereas for graphs with many edges, the number of totally cyclic orientations tends to dominate the number of spanning trees. This qualitative statement was made more precise by Thomassen.

\begin{theorem}[Thomassen \cite{MR2732508}]
If $G$ is a simple graph on $n$ vertices with $m \leq 16n/15$ edges, then \[\alpha(G) > \tau(G),\] and if $G$ is a
bridgeless graph on $n$ vertices with $m \geq 4n-4$ edges then \[\alpha^*(G) > \tau(G).\]
\end{theorem}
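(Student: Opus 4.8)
The plan is to establish the two inequalities separately but by a single mechanism: a cheap lower bound on the relevant orientation count, matched against a standard upper bound on $\tau(G)$. Throughout I use that all three quantities are evaluations of the Tutte polynomial and hence satisfy the recursion $f(G)=f(\gde)+f(\gce)$ whenever $e$ is neither a loop nor a bridge; since every summand that occurs is nonnegative, deleting or contracting such an edge can only \emph{decrease} the count. I may assume $G$ is connected, since otherwise $\tau(G)=0$ and both statements are trivial.

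For the first inequality I would delete the chords of a spanning tree one at a time, using $\alpha(G)=\alpha(\gde)+\alpha(\gce)\ge\alpha(\gde)$ to reduce $G$ to a spanning tree without increasing $\alpha$. As every orientation of a tree is acyclic, this gives the clean bound
\[\alpha(G)\ \ge\ \alpha(\text{spanning tree})\ =\ 2^{n-1}.\]
On the other side, a spanning tree is the complement of its set of $c:=m-n+1$ chords, and distinct spanning trees determine distinct chord sets, so $\tau(G)\le\binom{m}{m-n+1}$. It then suffices to verify the purely numerical inequality $\binom{m}{m-n+1}<2^{n-1}$ whenever $m\le 16n/15$. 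Writing $m=\gamma n$, the exponent of the binomial coefficient is about $\gamma H(1/\gamma)\,n$, where $H$ is the binary entropy; at $\gamma=16/15$ this is roughly $0.36\,n$, comfortably below $n-1$, so there is in fact slack and the method would work well past $\gamma=16/15$. The residual task is to confirm this uniformly in $n$, including the small cases where $c\le 1$ and $G$ is a tree or unicyclic.

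For the second inequality I would run the dual reduction: contracting the edges of a spanning tree one at a time, using $\alpha^*(G)\ge\alpha^*(\gce)$ to collapse $G$ to a single vertex carrying $c=m-n+1$ loops. Because $G$ is bridgeless, contraction preserves $2$-edge-connectivity and every contracted edge survives as a non-bridge throughout, so the recursion applies at each step; and since a loop lies on a directed cycle under either orientation, every orientation of the final bouquet is totally cyclic. Hence
\[\alpha^*(G)\ \ge\ \alpha^*(\text{bouquet of $c$ loops})\ =\ 2^{\,m-n+1}.\]
Here the naive binomial estimate for $\tau$ is too weak, so I would instead combine the Matrix--Tree theorem with the AM--GM inequality applied to the nonzero Laplacian eigenvalues (whose sum is $2m$) to obtain $\tau(G)\le\tfrac1n\bigl(\tfrac{2m}{n-1}\bigr)^{n-1}$. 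Setting $k=n-1$ and $m=4k$ makes the lower bound equal to $8^{k}$ and the upper bound equal to $8^{k}/(k+1)$, so the inequality holds with room to spare; a short monotonicity check (each unit increase in $m$ multiplies the left side by $2$ but the right side by at most $e^{1/4}<2$) extends it to all $m\ge 4n-4$, which is exactly where the threshold $4n-4$ originates.

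The substantive content, and the step I expect to be most delicate, is matching the two sides precisely at the stated thresholds. The orientation lower bounds are essentially free, so everything rests on choosing an upper bound for $\tau(G)$ that is sharp enough: the elementary $\binom{m}{m-n+1}$ suffices in the sparse regime but is too lossy in the dense one, where only the spectral AM--GM bound is strong enough to deliver $4n-4$. Verifying the resulting inequalities uniformly in $n$---in particular handling the small graphs and the boundary cases, where the asymptotic entropy and AM--GM estimates are not yet valid---is the fiddly but decisive part of the argument.
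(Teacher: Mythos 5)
This statement is quoted in the paper as background, with attribution to Thomassen \cite{MR2732508}; the paper itself contains no proof of it, so the comparison has to be with Thomassen's original argument rather than anything in this manuscript. Your proposal is essentially correct. For the dense half you have in fact reconstructed Thomassen's actual proof: the bound $\alpha^*(G)\ge 2^{m-n+1}$ by contracting a spanning tree (your justification is sound --- contraction preserves bridgelessness, so each tree edge is a non-loop non-bridge when contracted and the recursion $\alpha^*(G)=\alpha^*(G-e)+\alpha^*(G/e)$ applies; one quibble: say ``preserves bridgelessness'' rather than ``preserves $2$-edge-connectivity,'' since contraction of a disconnected bridgeless graph is still only bridgeless), matched against Grimmett's AM--GM bound $\tau(G)\le\frac1n\bigl(\frac{2m}{n-1}\bigr)^{n-1}$, which is exactly tight at $m=4(n-1)$ up to the factor $\frac1n$ that supplies strictness, and your monotonicity step ($\times 2$ versus $\times e^{(n-1)/m}\le e^{1/4}$) is valid because $m\ge 4n-4$ throughout. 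For the sparse half you take a simpler route than Thomassen, whose $16/15$ comes from a more involved inductive argument: your bounds $\alpha(G)\ge 2^{n-1}$ (deleting non-bridges down to a spanning tree) and $\tau(G)\le\binom{m}{m-n+1}$ are both correct, and the numerical inequality $\binom{m}{m-n+1}<2^{n-1}$ does hold uniformly for connected graphs with $m\le 16n/15$ and $n\ge 2$ --- the check you defer is genuinely routine, e.g.\ $\binom{m}{c}\le (em/c)^c\le 2^{0.37n}$ when $c=m-n+1\ge 2$ (forcing $n\ge 15$), while $c\le 1$ gives $\tau\le m<2^{n-1}$; as you observe, this even has slack past $16/15$ (up to roughly $1.29n$), so your argument is weaker than Thomassen's only in that it does not recover whatever stronger inductive statement produced his constant. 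Two pedantic caveats: the strict inequality fails in the degenerate case $n=1$ ($\alpha=\tau=1$), which the quoted statement silently excludes, and in the sparse case simplicity is what rules out the boundary multigraph equality case (the digon, $n=2$, $m=2$) where your two bounds meet exactly.
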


This result raises the possibility that the Merino-Welsh conjecture might be resolved by extending upwards the value of 
$m$ for which acyclic orientations are guaranteed to dominate spanning trees, and 
downwards the value of $m$ for which totally cyclic orientations are guaranteed to dominate until the two bounds meet
and therefore cover every possibility. More precisely, Thomassen asked the following question:

\begin{question}[Thomassen \cite{MR2732508}]
If $G$ is a bridgeless, loopless graph with $n$ vertices and $m$ edges, then is it true that
\begin{align*}
\alpha(G) & \geq \tau(G) \text{ if } m \leq 2n-2, \text{ and }\\
\alpha^*(G) & \geq \tau(G) \text{ if } m \geq 2n-2?
\end{align*}
\end{question}

However, the answer to Thomassen's question is ``No''; if $G$ is the graph of Figure~\ref{thom1} consisting of $n-2$ digons and two edges arranged in a cycle (in arbitrary order), then 
\begin{align*}
\tau(G) &= 2^{n-1}+(n-2) 2^{n-3},\\
\alpha(G) &=2^n - 2,\\
\alpha^*(G) &= 2 \cdot 3^{n-2}.
\end{align*}
Therefore for $n \geq 6$, it follows that $\alpha(G) < \tau(G)$. The dual graphs $H  = G^*$ provides examples of
$n$-vertex graphs with $2n-2$ edges where $\alpha^*(H) < \tau(H)$.  However, we know of no $n$-vertex graphs with {\em strictly fewer} than $2n-2$ edges for which $\alpha(G) < \tau(G)$ and, similarly, none with {\em strictly more} than $2n-2$ edges for which $\alpha^*(G) < \tau(G)$.

\begin{figure}
\begin{center}
\begin{tikzpicture}[out=45,in=135,relative]
\tikzstyle{vertex}=[circle, fill=gray,draw = black, inner sep = 0.6mm]
\node [vertex] (v0) at (0:1.5cm) {};
\node [vertex] (v1) at (30:1.5cm) {};
\node [vertex] (v2) at (60:1.5cm) {};
\node [vertex] (v4) at (120:1.5cm) {};
\node [vertex] (v5) at (150:1.5cm) {};
\node [vertex] (v6) at (180:1.5cm) {};
\node [vertex] (v7) at (210:1.5cm) {};
\node [vertex] (v8) at (240:1.5cm) {};
\node [vertex] (v9) at (270:1.5cm) {};
\node [vertex] (v10) at (300:1.5cm) {};
\node [vertex] (v11) at (330:1.5cm) {};
\draw  [bend right] (v4) to (v5);
\draw  [bend left] (v4) to (v5);
\draw  [bend right] (v5) to (v6);
\draw  [bend left] (v5) to (v6);
\draw  [bend right] (v6) to (v7);
\draw  [bend left] (v6) to (v7);
\draw  [bend right] (v7) to (v8);
\draw  [bend left] (v7) to (v8);
\draw [thick] (v8)--(v9);
\draw [thick] (v9)--(v10);
\draw  [bend right] (v10) to (v11);
\draw  [bend left] (v10) to (v11);
\draw  [bend right] (v11) to (v0);
\draw  [bend left] (v11) to (v0);
\draw  [bend right] (v0) to (v1);
\draw  [bend left] (v0) to (v1);
\draw  [bend right] (v1) to (v2);
\draw  [bend left] (v1) to (v2);
\draw [dashed, thick, bend right] (v2) to (v4);
\end{tikzpicture}
\caption{Graph answering Thomassen's question in the negative}
\label{thom1}
\end{center}
\end{figure}
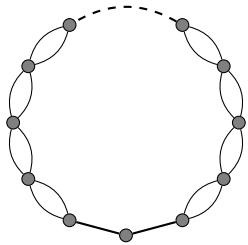

Therefore, at least for graphs with $m=2n-2$ edges, the proof must either consider the two parameters simultaneously, or relate them in a more precise way to the graph structure. In the remainder of this paper, we give a proof that the Merino-Welsh conjecture holds for {\em series-parallel graphs}, which are the graphs that arise from a single edge by repeatedly duplicating or subdividing edges in any fashion.  

Although series-parallel graphs form only a tiny subset of all graphs, the class is closed under duality and includes
numerous graphs with $m=2n-2$ edges (including the graphs described above).


\section{Deletion and Contraction}

The {\em Tutte polynomial} of a graph $G$ is the $2$-variable polynomial $T_G(x,y)$ that is equal to $1$ if the graph has no edges, and is otherwise defined recursively by 
\[
T_G(x,y) = 
\begin{cases}
x\,T_{\gce} (x,y), & \text{if $e$ is a bridge;}\\
y\,T_{\gde}(x,y), & \text{if $e$ is a loop;}\\
T_{\gde}(x,y) + T_{\gce}(x,y), & \text{otherwise}.
\end{cases}
\]
Here $\gde$ and $\gce$ are the graphs obtained from $G$ by either {\em deleting} or {\em contracting} 
$e$ respectively.  Our three parameters $\tau(G)$, $\alpha(G)$ and $\alpha^*(G)$ are all {\em evaluations} of the Tutte polynomial 
\begin{align*}
\tau(G) & = T_G(1,1),\\
\alpha(G) &= T_G(2,0),\\
\alpha^*(G) &= T_G(0,2), 
\end{align*}
and so they individually satisfy the ``deletion-contraction" identity. The fundamental problem preventing the Merino-Welsh conjecture from being a trivial exercise in induction is that {\em deletion} of an edge can create bridges, and 
{\em contraction} of an edge can create loops, thus creating graphs to which the inductive hypothesis does not apply.

Two edges $e$, $f$ are in {\em series} in a graph if neither is a bridge and any cycle containing $e$ also contains $f$, and are in {\em parallel} if neither is a loop and any edge-cutset containing $e$ also contains $f$. The {\em series class} of $e$ is the set $\sigma(e)$ of all edges in
series with $e$ and is denoted $\sigma(e)$, while the {\em parallel class} of $e$ is the set $\pi(e)$ of all edges in 
parallel with $e$. If $|\sigma(e)| > 1$ then deleting $e$ creates a bridge, while if $|\pi(e)| > 1$, then contracting $e$
creates a loop. However if $|\sigma(e)|=|\pi(e)|=1$, then $e$ can be safely deleted and contracted, and it is immediate that the additive Merino-Welsh conjecture (Conjecture~\ref{conj:CMadd}) holds for $G$ if it holds for both $G\backslash e$ and
$\gce$. To show that this is also true for the {\em multiplicative} Merino-Welsh conjecture (Conjecture~\ref{conj:CMmult}) we first need some lemmas due to Jackson \cite{MR2663569}. 

\begin{lemma}\label{lem:disc}
The condition $\alpha(G)\alpha^*(G)\geq \tau(G)^2$  
is equivalent to the statement that for all $\lambda\in\reals$,
\begin{equation}\label{eq:disc}
 \alpha(G)\lambda^2  - 2\tau(G)\lambda  + \alpha^*(G) \geq 0.\end{equation}\end{lemma}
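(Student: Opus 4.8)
The plan is to treat the left-hand side of \eqref{eq:disc} as a quadratic polynomial in the real variable $\lambda$ with leading coefficient $\alpha(G)$, and to reduce the claimed equivalence to the elementary fact that an upward-opening parabola is nonnegative on all of $\reals$ precisely when its minimum value is nonnegative. Since $G$ is loopless, any orientation obtained by fixing a linear order on the vertices and directing each edge from its lower to its higher endpoint is acyclic, so $\alpha(G) \geq 1 > 0$; this guarantees that the leading coefficient is strictly positive and that we are genuinely dealing with a parabola rather than a line.

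The key computational step I would carry out is to complete the square, writing
\begin{equation*}
\alpha(G)\lambda^2 - 2\tau(G)\lambda + \alpha^*(G) = \alpha(G)\left(\lambda - \frac{\tau(G)}{\alpha(G)}\right)^2 + \frac{\alpha(G)\alpha^*(G) - \tau(G)^2}{\alpha(G)}.
\end{equation*}
Because $\alpha(G) > 0$, the squared term is nonnegative for every $\lambda$ and equals zero exactly when $\lambda = \tau(G)/\alpha(G)$. Hence the minimum value of the quadratic over $\lambda \in \reals$ is precisely the constant remainder $\bigl(\alpha(G)\alpha^*(G) - \tau(G)^2\bigr)/\alpha(G)$.

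From this both implications follow at once. If $\alpha(G)\alpha^*(G) \geq \tau(G)^2$, the remainder is nonnegative and, being added to a nonnegative square, the whole expression is nonnegative for all $\lambda$, giving \eqref{eq:disc}. Conversely, if \eqref{eq:disc} holds for all $\lambda$, then in particular it holds at the minimising value $\lambda = \tau(G)/\alpha(G)$, where the square vanishes and the inequality collapses to $\alpha(G)\alpha^*(G) - \tau(G)^2 \geq 0$, which is the multiplicative condition.

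I do not expect any genuine obstacle here, as the statement is an instance of the standard nonnegativity criterion for quadratics. The only point requiring a little care is the status of the leading coefficient: one must confirm $\alpha(G) > 0$ before completing the square, since otherwise the expression degenerates to a linear function and the argument through the vertex is invalid. This is exactly where looplessness of $G$ is used. If one preferred to avoid invoking looplessness, the same conclusion could be reached uniformly by phrasing the argument through the discriminant $4\tau(G)^2 - 4\alpha(G)\alpha^*(G)$ together with a separate check of the degenerate case $\alpha(G) = 0$, but completing the square is the cleaner route given that $\alpha(G) \geq 1$.
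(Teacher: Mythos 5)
Your proof is correct and takes essentially the same approach as the paper: the paper invokes the standard discriminant criterion for nonnegativity of the quadratic, noting the discriminant is $4\tau(G)^2 - 4\alpha(G)\alpha^*(G)$, which is exactly the computation your completed square encodes in a slightly different form. Your explicit check that $\alpha(G) \geq 1$ for loopless $G$ fills in a small point the paper leaves tacit, since the discriminant criterion, like completing the square, is only valid when the leading coefficient is strictly positive.
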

\begin{proof}
The left-hand side of \eqref{eq:disc}, viewed as a polynomial in $\lambda$, is non-negative everywhere
if and only if its discriminant is non-positive. As the discriminant is 
\[
4\tau(G)^2-4\alpha(G)\alpha^*(G)
\]
the lemma follows.
\end{proof}

\begin{lemma}\label{lem:parser}
Suppose that $e$ is an edge of a graph $G$, neither a loop nor a bridge, such that $|\sigma(e)|=|\pi(e)|=1$. Furthermore, suppose that $\gde$ and $\gce$, both satisfy the multiplicative Merino-Welsh conjecture (Conjecture~\ref{conj:CMmult}). Then $G$ also satisfies the multiplicative Merino-Welsh conjecture.
\end{lemma}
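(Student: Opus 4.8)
The plan is to exploit the discriminant reformulation of Lemma~\ref{lem:disc}, which replaces the awkward product condition $\alpha(G)\alpha^*(G) \geq \tau(G)^2$ by the statement that a certain quadratic in $\lambda$ is nonnegative for every real $\lambda$. The advantage is that this quadratic is assembled from the three parameters \emph{linearly}, so it behaves additively under deletion and contraction, whereas the product form does not. This is presumably exactly why Lemma~\ref{lem:disc} was established first.

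First I would record the deletion--contraction identities. Since $e$ is neither a loop nor a bridge, the Tutte polynomial satisfies $T_G = T_{\gde} + T_{\gce}$, and evaluating at $(1,1)$, $(2,0)$ and $(0,2)$ gives
\begin{align*}
\tau(G) &= \tau(\gde) + \tau(\gce),\\
\alpha(G) &= \alpha(\gde) + \alpha(\gce),\\
\alpha^*(G) &= \alpha^*(\gde) + \alpha^*(\gce).
\end{align*}
The hypotheses $\abs{\sigma(e)} = \abs{\pi(e)} = 1$ are precisely what guarantee that $\gde$ is bridgeless and $\gce$ is loopless, so that both are genuine instances to which the multiplicative conjecture (and hence its discriminant form) applies.

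Next I would apply Lemma~\ref{lem:disc} to each of $\gde$ and $\gce$. By hypothesis both satisfy the multiplicative conjecture, so for every $\lambda \in \reals$,
\begin{align*}
\alpha(\gde)\lambda^2 - 2\tau(\gde)\lambda + \alpha^*(\gde) &\geq 0,\\
\alpha(\gce)\lambda^2 - 2\tau(\gce)\lambda + \alpha^*(\gce) &\geq 0.
\end{align*}
Adding these two inequalities and substituting the additive identities above, the sum is \emph{exactly} $\alpha(G)\lambda^2 - 2\tau(G)\lambda + \alpha^*(G)$, which is therefore nonnegative for all $\lambda$. Applying Lemma~\ref{lem:disc} in the reverse direction then yields $\alpha(G)\alpha^*(G) \geq \tau(G)^2$, as required.

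The argument is short, and the only real subtlety lies in the bookkeeping rather than in any inequality: one must confirm that all three parameters are genuinely additive here, which is exactly where it is essential that $e$ is neither a loop nor a bridge, so that the third case of the Tutte recursion applies. An alternative, more pedestrian route avoids Lemma~\ref{lem:disc} entirely: starting from $\alpha(\gde)\alpha^*(\gde) \geq \tau(\gde)^2$ and $\alpha(\gce)\alpha^*(\gce) \geq \tau(\gce)^2$, one expands $\alpha(G)\alpha^*(G)$ using the additive identities and bounds the two cross terms from below by $2\tau(\gde)\tau(\gce)$ via the arithmetic--geometric mean inequality. This works equally well, but the discriminant version is cleaner and makes transparent why the quadratic reformulation was worth setting up in advance.
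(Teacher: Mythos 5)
Your proof is correct and follows essentially the same route as the paper: both apply the deletion--contraction identity to the quadratic $\alpha(G)\lambda^2 - 2\tau(G)\lambda + \alpha^*(G)$, use Lemma~\ref{lem:disc} to show each of the two resulting summands is nonnegative, and then apply Lemma~\ref{lem:disc} in reverse. Your added remarks (the bookkeeping about why additivity needs $e$ to be neither a loop nor a bridge, and the alternative AM--GM expansion of the cross terms) are both accurate but do not change the substance of the argument.
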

\begin{proof}By the deletion-contraction identity, we have
\begin{align*}
\alpha(G)\lambda^2 - 2\tau(G) \lambda + \alpha^*(G) &= (\alpha(\gde)\lambda^2 - 2\tau(\gde)\lambda + \alpha^*(\gde))\\ &\phantom{=}{ } +
(\alpha(\gce)\lambda^2 - 2\tau(\gce)\lambda + \alpha^*(\gce)).\end{align*}
Neither $\gde$ nor $\gce$ has a loop or a bridge and so by Lemma~\ref{lem:disc}, both of the bracketed terms are positive. By applying Lemma~\ref{lem:disc} again, the result follows.
\end{proof}

Therefore in any minimal counterexample to the Merino-Welsh conjecture, every edge must lie in a non-trivial series or parallel class. On the other hand, none of the series or parallel classes can be very large.

\begin{lemma}\label{lem:par}
Suppose that $G$ is a graph that satisfies the multiplicative Merino-Welsh conjecture (Conjecture~\ref{conj:CMmult}), and that $e$ is an edge of $G$ with $|\pi(e)|\geq 2$. Let $G^+$ be formed from $G$ by adding two edges $f$ and $g$ in parallel with $e$. Then $G^+$ also satisfies the multiplicative Merino-Welsh conjecture.
\end{lemma}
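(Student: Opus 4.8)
The plan is to parametrise by the size of the parallel class. Write $s=\abs{\pi(e)}\ge 2$, and for $k\ge s$ let $G_k$ be the graph obtained from $G$ by giving the parallel class of $e$ exactly $k$ edges (all other structure unchanged), so that $G=G_s$ and $G^+=G_{s+2}$. Let $C$ be the graph obtained from $G$ by contracting the whole parallel class (identifying its two endpoints and discarding those edges), and set $b=\tau(C)$ and $d=\alpha^*(C)$. Applying deletion–contraction repeatedly to a single edge of the class --- legitimate since for $k\ge 2$ such an edge is neither a loop nor a bridge, and contracting it turns its parallel partners into loops --- yields, for all $k\ge s$,
\[
T_{G_k}(x,y)=T_{G_s}(x,y)+\bigl(y^{s}+y^{s+1}+\cdots+y^{k-1}\bigr)\,T_C(x,y).
\]
Evaluating at $(1,1)$, $(2,0)$ and $(0,2)$ gives the three shapes I will exploit: $\tau(G_k)=\tau(G_s)+(k-s)b$ is linear in $k$; $\alpha(G_k)=\alpha(G_s)$ is \emph{constant} (every $0^{j}$ with $j\ge s\ge 1$ vanishes); and $\alpha^*(G_k)=\alpha^*(G_s)+(2^{k}-2^{s})d$ is, up to an additive constant, exponential in $k$.

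Writing $A:=\alpha(G_s)\ge 0$, $\tau_s:=\tau(G_s)$ and $\beta_s:=\alpha^*(G_s)$, I substitute $\tau(G^+)=\tau_s+2b$ and $\alpha^*(G^+)=\beta_s+3\cdot 2^{s}d$ into the target inequality $A\,\alpha^*(G^+)\ge \tau(G^+)^2$ and rearrange it into
\[
\bigl(A\beta_s-\tau_s^{2}\bigr)+3\cdot 2^{s}Ad-4b(\tau_s+b)\ \ge\ 0.
\]
The first bracket is nonnegative because $G=G_s$ satisfies Conjecture~\ref{conj:CMmult} by hypothesis, so it is enough to prove $3\cdot 2^{s}Ad\ge 4b(\tau_s+b)$. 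I would deduce this from two independent ingredients. The elementary one is a size estimate: the spanning trees of $G_s$ using exactly one class edge number $s\,\tau(C)=sb$, and any further trees use none, so $\tau_s\ge sb\ge 2b$; hence $b\le \tau_s/2$, and a one-line computation gives $4b(\tau_s+b)=4b\tau_s+4b^{2}\le 2\tau_s^{2}+\tau_s^{2}=3\tau_s^{2}$.

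The second ingredient, which I expect to be the main obstacle, is the bound $2^{s}Ad\ge \tau_s^{2}$. Invoking Conjecture~\ref{conj:CMmult} for $G_s$ once more in the form $A\beta_s\ge\tau_s^{2}$, it suffices to show $\beta_s\le 2^{s}d$, that is $\alpha^*(G_s)\le 2^{s}\alpha^*(C)$. The natural route is a map-and-fibre argument: contracting the parallel class sends each totally cyclic orientation of $G_s$ to an orientation of $C$ (whose edges are exactly the non-class edges of $G_s$), and one must check that this image is again totally cyclic, since a directed cycle through a non-class edge survives contraction as a closed directed walk, hence contains a directed cycle through that edge; meanwhile each totally cyclic orientation of $C$ has at most $2^{s}$ preimages, because the only remaining freedom lies in orienting the $s$ class edges. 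Verifying that the induced orientation is genuinely totally cyclic --- equivalently, that $C$ is bridgeless and so $d\ge 1$ --- is the delicate point, and here the hypothesis $\abs{\pi(e)}\ge 2$ is exactly what is needed: the presence of a parallel partner forces $\sigma(e)=\{e\}$, so no edge is in series with the class, which is precisely the condition preventing contraction of the class from creating a bridge. Combining the two ingredients yields $3\cdot 2^{s}Ad\ge 3\tau_s^{2}\ge 4b(\tau_s+b)$, which closes the argument.
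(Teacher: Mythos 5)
Your proof is correct, but it reaches the paper's conclusion by a noticeably heavier route, so a comparison is worthwhile. The paper's proof consists of three one-line observations: $\alpha(G^+)=\alpha(G)$; $\alpha^*(G^+)\ge 4\alpha^*(G)$, obtained by extending each totally cyclic orientation of $G$ with free orientations of $f$ and $g$; and $\tau(G^+)\le 2\tau(G)$, because the trees using a class edge scale by $\abs{\pi_{G^+}(e)}/\abs{\pi_G(e)}=(s+2)/s\le 2$; multiplying gives $\alpha(G^+)\alpha^*(G^+)\ge 4\alpha(G)\alpha^*(G)\ge 4\tau(G)^2\ge \tau(G^+)^2$. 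Your Tutte-recursion parametrisation produces exactly these three facts in exact form: your fibre bound $\beta_s\le 2^s d$ is, via $\alpha^*(G^+)=\beta_s+3\cdot 2^s d$, equivalent to the paper's $\alpha^*(G^+)\ge 4\alpha^*(G)$ (you prove it by projecting onto $C$ with fibres of size at most $2^s$, where the paper uses the injection in the opposite direction), and your $\tau_s\ge sb$ is the same spanning-tree count underlying $\tau(G^+)\le 2\tau(G)$ --- indeed your inequality $4b(\tau_s+b)\le 3\tau_s^2$ is just $(\tau_s+2b)^2\le (2\tau_s)^2$ rearranged. What your version buys is exact formulas for all $G_k$ (linear growth of $\tau$, constant $\alpha$, exponential $\alpha^*$), which would let one prove sharper statements; what it costs is machinery the paper's four-line argument avoids. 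One remark in your write-up is misplaced, though it does not damage the proof: you identify the total cyclicity of the projected orientation as ``the delicate point'' and attribute it to $\sigma(e)=\{e\}$ forced by $\abs{\pi(e)}\ge 2$. In fact contraction never creates bridges in any graph --- your own closed-walk extraction already shows the image orientation is totally cyclic with no hypothesis on the class size, and the bound $\beta_s\le 2^s d$ needs no separate check that $d\ge 1$. The hypothesis $\abs{\pi(e)}\ge 2$ is consumed entirely in your first ingredient, $\tau_s\ge sb\ge 2b$: with $s=1$ one only gets $b\le\tau_s$, and then $4b(\tau_s+b)\le 3\tau_s^2$ genuinely fails, which is consistent with the lemma's hypothesis being necessary for the argument.
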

\begin{proof}
Given a totally cyclic orientation of $G$, we may obtain a totally cyclic orientation of $G^+$ by orienting $f$ and $g$ in any way we choose. Consequently $\alpha^*(G^+) \geq 4\alpha^*(G)$.
On the other hand, $\alpha(G^+)=\alpha(G)$, because the addition of parallel edges does not alter the number of acyclic orientations. The number of spanning trees of $G^+$ which do not contain a member of $\pi_{G^+}(e)$ is the same as the number of spanning trees of $G$ which do not contain a member of $\pi_G(e)$. However the number of spanning trees of $G^+$ which do contain a member of $\pi_{G^+}(e)$ is equal to the number of spanning trees of $G$ which contain a member of $\pi_G(e)$ multiplied by $|\pi_{G^+}(e)|/|\pi_G(e)|$. Consequently 
\[
\tau(G^+) \leq |\pi_{G^+}(e)|/|\pi_G(e)| \tau(G) \leq 2\tau (G).\]
 Therefore $\alpha(G^+)\alpha^*(G^+) \geq 4\alpha(G)\alpha^*(G) \geq (2\tau(G))^2 \geq \tau(G^+)^2$.
\end{proof}

The following is the analogous version of the previous lemma with parallel replaced by series. The proof is similar but the roles of $\alpha$ and $\alpha^*$ are interchanged.
\begin{lemma}\label{lem:ser}
Suppose that $G$ is a graph that satisfies the multiplicative Merino-Welsh conjecture, and that $e$ is an edge of $G$ with $|\sigma(e)|\geq 2$. Let $G^+$ be formed from $G$ by adding two edges $f$ and $g$ in series with $e$. Then $G^+$ also satisfies the multiplicative Merino-Welsh conjecture. \qed
\end{lemma}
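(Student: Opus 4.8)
The plan is to mirror the proof of Lemma~\ref{lem:par} exactly, interchanging the roles of $\alpha$ and $\alpha^*$, since adding edges in series is the operation dual to adding edges in parallel: it subdivides $e$, replacing it by a path $u - w_1 - w_2 - v$ whose two new internal vertices $w_1,w_2$ have degree two. I would establish the three inequalities
\[
\alpha(G^+) \geq 4\alpha(G), \qquad \alpha^*(G^+) = \alpha^*(G), \qquad \tau(G^+) \leq 2\tau(G),
\]
and then combine them as before: using that $G$ satisfies the conjecture, $\alpha(G^+)\alpha^*(G^+) \geq 4\alpha(G)\alpha^*(G) \geq 4\tau(G)^2 \geq \tau(G^+)^2$.

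The identity $\alpha^*(G^+)=\alpha^*(G)$ is the dual of the observation that adding parallel edges does not change $\alpha$. Because $w_1$ and $w_2$ have degree two, any directed cycle meeting the path must traverse all three path edges coherently; hence in a totally cyclic orientation of $G^+$ the path is forced to be oriented consistently from $u$ to $v$ or from $v$ to $u$, and contracting it back to $e$ gives a bijection with the totally cyclic orientations of $G$. For $\tau(G^+)\leq 2\tau(G)$ I would argue directly, and this is where the hypothesis $|\sigma(e)|\geq 2$ is used (dual to the use of $|\pi(e)|\geq 2$ in Lemma~\ref{lem:par}). Splitting the spanning trees of $G^+$ according to how many of the three path edges they contain, one finds $\tau(G^+) = \tau_e(G) + 3\tau_{\bar e}(G)$, where $\tau_e$ and $\tau_{\bar e}$ count the spanning trees of $G$ that do and do not contain $e$. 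Since $|\sigma(e)|\geq 2$, the edge $e$ lies in a two-element cocircuit $\{e,e'\}$, and the swap $T \mapsto T - e' + e$ injects the trees containing $e'$ but not $e$ into those containing $e$, giving $\tau_{\bar e}(G) \leq \tau_e(G)$ and hence $\tau(G^+)\leq 2\tau(G)$.

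I expect the inequality $\alpha(G^+)\geq 4\alpha(G)$ to be the main obstacle, because it is not quite as clean as its dual: in Lemma~\ref{lem:par} one simply orients the two new parallel edges in all four ways, whereas here the three path edges cannot be oriented freely. Instead I would count extensions of a fixed acyclic orientation of $G - e$. Any directed cycle through the path again forces coherent orientation, so of the eight orientations of the path at most one can fail to be acyclic, namely a coherent orientation pointing opposite to an already-present directed $u$--$v$ path in $G-e$; thus each acyclic orientation of $G-e$ extends to at least seven acyclic orientations of $G^+$, whereas it extends to at most two acyclic orientations of $G$.

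Recording these counts precisely, let $N_2$ be the number of acyclic orientations of $G-e$ admitting a directed path between $u$ and $v$ (necessarily in exactly one direction) and $N_0$ the number admitting none. Then every acyclic orientation of $G-e$ is of exactly one of these two types, and the extension counts give $\alpha(G^+)=7N_2 + 8N_0$ while $\alpha(G)=N_2+2N_0$. The desired bound $\alpha(G^+)\geq 4\alpha(G)$ therefore reduces to the trivial inequality $3N_2\geq 0$, which completes the final step.
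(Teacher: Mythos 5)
Your proof is correct and is essentially the paper's intended argument: the paper proves this lemma simply by dualizing the proof of Lemma~\ref{lem:par}, i.e.\ establishing exactly your three inequalities $\alpha(G^+)\geq 4\alpha(G)$, $\alpha^*(G^+)=\alpha^*(G)$ and $\tau(G^+)\leq 2\tau(G)$ and combining them in the same way. The only difference is cosmetic: your $7N_2+8N_0$ extension count (and the swap injection giving $\tau_{\bar e}(G)\leq\tau_e(G)$) does slightly more work than necessary, since the parallel-case argument dualizes directly --- keeping the path edge corresponding to $e$ oriented as in a given acyclic orientation of $G$ and orienting $f$ and $g$ in all four ways always yields acyclic orientations of $G^+$, because any directed cycle through the degree-two vertices would have to traverse the path coherently and contract to a directed cycle of $G$.
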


The consequence of these two results is that in any minimal counterexample to the multiplicative Merino-Welsh conjecture, every edge lies either in a series class of size 2 or 3, or a parallel class of size 2 or 3.

\section{Series--Parallel Graphs}


A \emph{two--terminal} graph $(G,s,t)$ is a graph $G$ with two distinct distinguished vertices, $s$ and $t$, called the {\em terminals}, with $s$ designated as the {\em source} and $t$ as the {\em sink}. A \emph{two--terminal series--parallel (TTSP)} graph is a two--terminal graph that can be constructed from the two--terminal graph $K_2$, with the sole edge connecting the source to the sink, by a sequence of the following operations:
\begin{enumerate}
\item (parallel connection) take two TTSP graphs $(G,s_G,t_G)$ and $(H,s_H,t_H)$, identify $s_G$ with $s_H$, forming the source of the new graph $G \pll H$, and identify $t_G$ with $t_H$, forming the sink of $G \pll H$;
\item (series connection) take two TTSP graphs $(G,s_G,t_G)$ and $(H,s_H,t_H)$, identify $t_G$ with $s_H$ and let $s_G$ and $t_H$ be the source and sink, respectively, of the new graph $G \ser H$.
\end{enumerate}
Both the series and parallel connection operations are associative.

Series-parallel graphs can be defined in various equivalent ways, for example as graphs with no $K_4$-minor,  or as graphs whose blocks are either single edges, or can be reduced to a loop by a sequence of operations each of which is the suppression of a vertex of degree two or the elimination of an edge in parallel to another edge. However for us, the key property of series-parallel graphs is their relationship to two--terminal series--parallel graphs as expressed in the following lemma:

\begin{lemma}\label{lem:2terminal}
A graph $G$ is a 2-connected series--parallel graph if and only if for every edge $e=st$, the two--terminal graph $(\gde, s, t)$ is a TTSP graph. \qed
\end{lemma}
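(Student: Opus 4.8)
The plan is to prove both directions of the equivalence in Lemma~\ref{lem:2terminal} by relating the series--parallel construction operations on two--terminal graphs to the minor-free characterisation of series--parallel graphs, namely the absence of a $K_4$-minor. For the forward direction, suppose $G$ is a $2$-connected series--parallel graph and fix an edge $e = st$. I would show that $(\gde, s, t)$ is a TTSP graph by induction on the number of edges, using the fact that a $2$-connected series--parallel graph on more than two vertices must contain either a vertex of degree two (which corresponds to a series reduction) or a pair of parallel edges (which corresponds to a parallel reduction); this is exactly the reduction property mentioned in the excerpt. The key observation is that deleting $e$ and then ``undoing'' these reductions in reverse corresponds precisely to building $(\gde, s, t)$ from $K_2$ via the series and parallel connection operations, with $s$ and $t$ as the designated terminals throughout.

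For the reverse direction, suppose that for every edge $e = st$ the two--terminal graph $(\gde, s, t)$ is TTSP. I would first argue that $G$ is $2$-connected: the existence of the TTSP decomposition of $\gde$ together with the edge $e$ forces a rich connectivity structure, and any cut vertex would obstruct the series--parallel build for some choice of edge. To see that $G$ is series--parallel, I would proceed by establishing that $G$ has no $K_4$-minor. The cleanest route is to prove the contrapositive using the inductive structure of TTSP graphs: a TTSP graph built from $K_2$ by series and parallel connections never produces a $K_4$-minor, because each operation only identifies terminals or places graphs in series/parallel, and one checks that neither operation can create the three internally disjoint $s$--$t$ paths together with the cross-connections that a $K_4$-minor requires. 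Adding back the single edge $e$ between the two terminals $s$ and $t$ of the TTSP graph $\gde$ likewise cannot create a $K_4$-minor, since $e$ is simply another parallel $s$--$t$ connection.

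The main obstacle I expect is the reverse direction's connectivity argument and the careful bookkeeping of which vertices serve as terminals across the recursive decomposition. In particular, the hypothesis quantifies over \emph{every} edge $e = st$, and one must use this universal quantification correctly: a single TTSP decomposition of one $\gde$ is not by itself enough to conclude that $G$ is $2$-connected or $K_4$-minor-free, so I would either exploit the flexibility of choosing $e$ or show that the TTSP property of a single $\gde$ already implies $G = (\gde) + e$ is $2$-connected series--parallel and that the universal statement then follows. The technical heart is verifying the inductive claim that series and parallel connections preserve the absence of a $K_4$-minor, which requires tracking how the terminals interact with potential minor models; this is a routine but genuinely case-based verification. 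Since Lemma~\ref{lem:2terminal} is stated with proof omitted (marked \textnormal{\qed}), the intended treatment is presumably to cite this as a standard and well-known characterisation of series--parallel graphs rather than to reprove it in full, and I would flag that the essential content is the translation between the recursive TTSP operations and the minor-closed definition.
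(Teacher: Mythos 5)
The paper contains no proof of this lemma at all: it is stated with a \qed{} and cited as a standard characterisation of $2$-connected series--parallel graphs (essentially Duffin's theorem), exactly as you surmise in your final paragraph. So there is no paper argument to compare against, and the question is only whether your reconstruction is sound. In outline it is. The forward direction by induction on the number of edges, using the fact that a $2$-connected series--parallel multigraph on more than two vertices has a degree-two vertex or a parallel pair, is the standard route; the bookkeeping you flag (the cases where the degree-two vertex is $s$ or $t$, or where the parallel pair meets $e$) is real but routine. You are also right that the universal quantification costs nothing: the reverse implication only needs the hypothesis for a single edge, after which the forward implication delivers the statement for every edge.

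One step in your reverse direction is too quick as written, though it is fixable along lines you partly anticipate. The justification that adding $e$ back ``cannot create a $K_4$-minor, since $e$ is simply another parallel $s$--$t$ connection'' is not a valid argument on its face: adding an edge between two vertices can certainly create a $K_4$-minor in general (adding $st$ to $K_4 - st$ does). The correct reading of your remark is that $G = (\gde) \pll K_2$ is itself a TTSP graph, so it suffices to prove that TTSP graphs are $K_4$-minor-free --- but then the induction you propose does not close with that hypothesis alone. In the parallel-connection step, a $K_4$-minor of $G_1 \pll G_2$ lies (since $K_4$ is $3$-connected and $\{s,t\}$ is a $2$-separation) in $G_1 + st$ or $G_2 + st$, and these are \emph{larger} TTSP graphs to which the weak inductive hypothesis does not apply. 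You must instead carry the strengthened hypothesis that for every TTSP graph $(G,s,t)$ the graph $G + st$ is $K_4$-minor-free (and, for your connectivity claim, $2$-connected); this strengthened statement is preserved by both connection operations via the standard fact that a $3$-connected minor of a graph with a $2$-separation is a minor of one side with the virtual edge added, and it directly yields both halves of what the reverse direction needs, making your separate cut-vertex argument unnecessary.
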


Consider a rooted binary tree, with each non-leaf node designated as either an $s$-node or a $p$-node (see Figure~\ref{fig:sptree} for an example). We can use this tree as a ``blueprint'' to construct a TTSP graph by first associating the graph $K_2$ with each leaf and then, working up the tree, associating with each $s$-node the series connection of its 
two children, and with each $p$-node the parallel connection of its two children, and finally reading off the graph associated with the root of the tree. This tree is called the {\em decomposition} tree of the TTSP graph, and any TTSP graph can be
described by such a decomposition tree. 

\begin{figure}
\begin{center}
\begin{tikzpicture}[xscale=0.6]
\tikzstyle{vertex}=[inner sep=0.25mm, outer sep = 0.5mm]
\tikzstyle{leaf}=[circle,draw=black,fill=white, inner sep=0.75mm]
\node (v0) at (0,0) {\footnotesize $K_2$};
\node (v1) at (1,0) {\footnotesize $K_2$};
\node (v2) at (2,0) {\footnotesize $K_2$};
\node (v3) at (3,0) {\footnotesize $K_2$};
\node (v6) at (6,0) {\footnotesize $K_2$};
\node (v7) at (7,0) {\footnotesize $K_2$};
\node [vertex] (v8) at (0.5,1) { \small $p$};
\draw (v8)--(v0);
\draw (v8)--(v1);
\node [vertex] (v9) at (2.5,1) { $s$};
\draw (v9)--(v2);
\draw (v9)--(v3);
\node (v10) at (4.5,1) {\footnotesize $K_2$};
\node [vertex] (v11) at (6.5,1) {$s$};
\draw (v11)--(v6);
\draw (v11)--(v7);
\node [vertex] (v12) at (1.5,2) {$s$};
\node [vertex] (v13) at (5.5,2) {\small $p$};
\draw (v12)--(v8);
\draw (v12)--(v9);
\draw (v13)--(v10);
\draw (v13)--(v11);
\node [vertex] (v14) at (3.5,3) { \small $p$};
\draw (v14)--(v12);
\draw (v14)--(v13);
\end{tikzpicture}
\hspace{2cm}
\begin{tikzpicture}[out=45,in=135,relative]
\tikzstyle{vertex}=[circle, fill=gray,draw = black, inner sep = 0.6mm]
\node [vertex] (v0) at (0,0) {};
\node [vertex] (v1) at (1,-1) {};
\node [vertex] (v2) at (2,-1) {};
\node [vertex] (v3) at (3,0) {};
\node [vertex] (v4) at (1.5,1) {};
\draw [bend right] (v0) to (v1);
\draw [bend left] (v0) to (v1);
\draw (v1)--(v2);
\draw [bend right] (v2) to (v3);
\draw [bend left] (v0) to (v4);
\draw [bend left] (v4) to (v3);
\draw [bend left] (v0) to (v3);
\node (bottom) at (1,-1.5) {};
\end{tikzpicture}
\end{center}
\caption{A decomposition tree and associated TTSP graph}
\label{fig:sptree}
\end{figure}
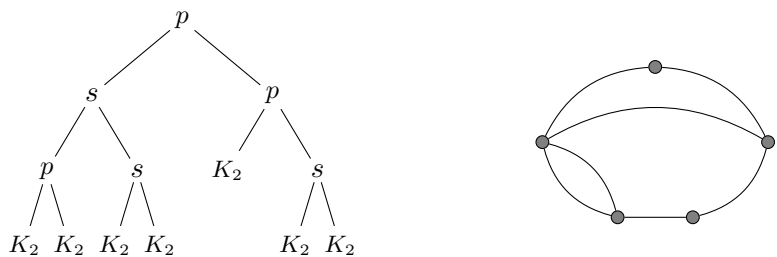

Any series-parallel graph is planar and its planar dual is again a series-parallel graph. However, we need a concept of duality---that we denote {\textit{sp}-duality}---that respects the terminals and allows us to remain within the class of two-terminal series--parallel graphs. Thus we say that two TTSP graphs $G$ and $H$ are \textit{sp}-dual if there are decomposition trees for $G$ and $H$ of identical shape, but with all the $s$-nodes changed to $p$-nodes, and vice versa.
There is however a close relationship between \textit{sp}-duality and planar duality: in particular, if $H = (H,s_H,t_H)$ is the \textit{sp}-dual of $G = (G,s_G,t_G)$ then there are plane embeddings of $G$ and $H$ such that $G+s_Gt_G$ is the planar dual of $H+s_Ht_H$.  We denote the \textit{sp}-dual of $G$ by $G^{*_\textit{sp}}$.

\begin{lemma}
If $G$, $H$ are TTSP graphs, then 
\begin{align*}
(G \ser H)^{*_\textit{sp}} &= G^{*_\textit{sp}} \pll H^{*_\textit{sp}},\\
(G \pll H)^{*_\textit{sp}} &= G^{*_\textit{sp}} \ser H^{*_\textit{sp}}.
\end{align*}
\end{lemma}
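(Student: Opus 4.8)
The plan is to argue entirely at the level of decomposition trees, exploiting the fact that the series and parallel connections correspond to attaching a new root to two decomposition trees. Specifically, if $T_G$ and $T_H$ are decomposition trees of $G$ and $H$, then a decomposition tree for $G \ser H$ is obtained by taking a new root, declaring it an $s$-node, and making $T_G$ and $T_H$ its two subtrees; likewise a decomposition tree for $G \pll H$ is obtained in the same way but with the root declared a $p$-node. This is immediate from the way a decomposition tree is read off, since evaluating at the root performs exactly the series (respectively parallel) connection of the graphs read off at its two children.

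First I would invoke the definition of \textit{sp}-duality to fix compatible trees. Since $G^{*_\textit{sp}}$ is an \textit{sp}-dual of $G$, there exist decomposition trees $T_G$ of $G$ and $T_G'$ of $G^{*_\textit{sp}}$ of identical shape such that $T_G'$ is obtained from $T_G$ by interchanging every $s$-node with a $p$-node and vice versa; similarly fix $T_H$ and $T_H'$ for $H$ and $H^{*_\textit{sp}}$.

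Next I would assemble the two sides. Let $S$ be the decomposition tree for $G \ser H$ obtained by joining $T_G$ and $T_H$ under an $s$-node root, and let $P$ be the decomposition tree for $G^{*_\textit{sp}} \pll H^{*_\textit{sp}}$ obtained by joining $T_G'$ and $T_H'$ under a $p$-node root. By construction $S$ and $P$ have identical shape, and $P$ is precisely the tree obtained from $S$ by interchanging all $s$-nodes and $p$-nodes: the root changes from $s$ to $p$, and the two subtrees change by the compatibility just fixed. Hence $S$ and $P$ witness that $G \ser H$ and $G^{*_\textit{sp}} \pll H^{*_\textit{sp}}$ are \textit{sp}-dual, which is the first identity. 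The second identity then follows either by the symmetric argument with the roles of $s$ and $p$ exchanged, or by applying the first identity to $G^{*_\textit{sp}}$ and $H^{*_\textit{sp}}$ and using that taking the \textit{sp}-dual twice returns the original graph (since swapping node types is an involution on decomposition trees).

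The step requiring the most care is not the combinatorial manipulation but the status of $(\cdot)^{*_\textit{sp}}$ as a well-defined operation: because a TTSP graph generally admits many decomposition trees (the series and parallel connections being associative), I should confirm that the existence-based definition of \textit{sp}-dual is consistent, in the sense that any two \textit{sp}-duals of $G$ are isomorphic as two--terminal graphs, so that the equalities in the statement are genuine equalities of TTSP graphs rather than mere instances of a relation. I expect this to be the only real obstacle; granting it, the identities are a direct reading of how the root node of a decomposition tree encodes the final connection operation.
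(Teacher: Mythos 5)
Your proof is correct and takes essentially the same approach as the paper, which likewise argues by considering the effect on the decomposition tree of swapping all $s$-nodes and $p$-nodes; you simply spell out the details the paper leaves implicit. Your closing remark about the well-definedness of the \textit{sp}-dual (independence of the chosen decomposition tree, up to two--terminal isomorphism) is a legitimate point of care that the paper passes over silently.
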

\begin{proof}
This follows immediately by considering the effect on the $sp$-tree of changing all $s$-nodes to $p$-nodes, and $p$-nodes to $s$-nodes.
\end{proof}

A \emph{2-forest} of a TTSP graph $G$ is a spanning subgraph of $G$ with two components, each of which is a tree, with one containing the sink of $G$ and the other containing the source of $G$. We denote the number of 2-forests of $G$ by $\tau_2(G)$. A \emph{very acyclic orientation} of a two-terminal series-parallel graph is an acyclic orientation in which there is no directed path between the two terminals. We denote the number of very acyclic orientations of $G$ by $\alpha_2(G)$. Finally, an \emph{almost totally cyclic orientation} of a two-terminal series-parallel graph is an orientation in which each edge either lies in a directed cycle or lies on a directed path between the two terminals. We denote the number of almost totally cyclic orientations of $G$ by $\alpha^*_2(G)$. The rationale behind introducing these additional parameters, thereby apparently complicating the problem, is that for a series-parallel graph, it is possible to keep track of these parameters through the operations of series and parallel connection.

\begin{lemma}
Suppose that $G = G_1 \ser G_2$. Then
\begin{align*} 
\tau(G) &= \tau(G_1)\tau(G_2),\\ 
\tau_2(G)&=\tau(G_1)\tau_2(G_2)+\tau_2(G_1)\tau(G_2),\\
\alpha(G)&=\alpha(G_1)\alpha(G_2),\\ \alpha_2(G)&= \alpha(G_1)\alpha(G_2)- \frac{(\alpha(G_1)-\alpha_2(G_1))(\alpha(G_2)-\alpha_2(G_2))}2,\\
\alpha^*_2(G)&= \alpha_2^*(G_1)\alpha_2^*(G_2)- \frac{(\alpha_2^*(G_1)-\alpha^*(G_1))(\alpha_2^*(G_2)-\alpha^*(G_2))}2,\\
\alpha^*(G)&=\alpha^*(G_1)\alpha^*(G_2).
\end{align*}
\end{lemma}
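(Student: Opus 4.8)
The plan is to verify each of the six identities by tracking, for the series connection $G = G_1 \ser G_2$, exactly how the combinatorial objects being counted decompose across the cut vertex formed by identifying $t_{G_1}$ with $s_{G_2}$. I would organise the six identities into three pairs, using \emph{sp}-duality to obtain the $\alpha^*$ and $\alpha^*_2$ statements for free once the $\alpha$ and $\alpha_2$ statements are established. Specifically, since series connection becomes parallel connection under \emph{sp}-duality and since $\alpha^*$ is dual to $\alpha$ (and $\alpha_2^*$ dual to $\alpha_2$, and $\tau_2$ to $\tau_2$), the bottom three lines of the lemma should follow from the corresponding parallel-connection identities by duality; I would state this explicitly rather than re-deriving them from scratch.

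First I would handle the two easy identities. For $\tau(G) = \tau(G_1)\tau(G_2)$, the point is that in a series connection the cut vertex separates the two parts, so a spanning tree of $G$ restricts to a spanning tree of each $G_i$ and conversely any pair glues to a spanning tree; this is just multiplicativity of $\tau$ over blocks joined at a cut vertex. For $\alpha(G) = \alpha(G_1)\alpha(G_2)$, the same cut-vertex argument works: an orientation of $G$ is acyclic if and only if its restriction to each $G_i$ is acyclic, because any directed cycle must lie entirely within one block, and the orientations of the two parts are chosen independently. The identity $\tau_2(G) = \tau(G_1)\tau_2(G_2) + \tau_2(G_1)\tau(G_2)$ requires slightly more care: a 2-forest of $G$ separates $s_G = s_{G_1}$ from $t_G = t_{G_2}$, and exactly one of the two parts must be the one that is ``split'' into two trees while the other remains a single tree, since the cut vertex lies in exactly one tree of the 2-forest; summing over which part is split gives the two terms.

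The main obstacle will be the identity for $\alpha_2$, the number of very acyclic orientations, and here the factor of $1/2$ and the subtracted product signal that a straightforward multiplicativity fails and an inclusion–exclusion is needed. My plan is to start from $\alpha(G) = \alpha(G_1)\alpha(G_2)$, which counts all acyclic orientations of $G$, and then subtract off those that \emph{do} contain a directed path between $s_G$ and $t_G$. In a series connection, any such directed terminal-to-terminal path of $G$ is the concatenation of a directed $s_{G_1}$-to-$t_{G_1}$ path in $G_1$ with a directed $s_{G_2}$-to-$t_{G_2}$ path in $G_2$ through the shared cut vertex; moreover these two sub-paths must be \emph{consistently} oriented (both ``forward'' or, in the reversed orientation, both ``backward''). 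The key observation I would aim to prove is that among acyclic orientations of a two-terminal series-parallel graph, the number admitting a directed terminal-to-terminal path equals $\alpha(G_i) - \alpha_2(G_i)$, and that exactly half of the joint configurations produce a consistent orientation of both halves into a genuine path (the other half orient one part source-to-sink and the other sink-to-source, which cannot extend across the cut). This ``exactly half'' phenomenon, coming from reversal symmetry of acyclic orientations, is what produces the $1/2$; pinning it down rigorously is the delicate step. Once the $\alpha_2$ identity is secured, the $\alpha_2^*$ identity follows by \emph{sp}-duality as indicated above, completing the proof.
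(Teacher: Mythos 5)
Your core argument is the paper's own: the only identity the paper proves in detail is the one for $\alpha_2$, and it does so exactly as you propose --- count the acyclic orientations of $G_1 \ser G_2$ that \emph{do} admit a directed path between the terminals, note that in an acyclic orientation such a path runs source-to-sink or sink-to-source but not both, observe that the restriction to each $G_i$ must then admit a terminal-to-terminal path (giving $\alpha(G_i)-\alpha_2(G_i)$ choices), and argue that exactly half of the resulting combinations align consistently across the cut vertex. Your identification of the edge-reversal involution as the source of the factor $1/2$ is precisely the right justification for that last step, which the paper merely asserts; and the cut-vertex arguments you give for $\tau$, $\tau_2$ and $\alpha$ are what the paper dismisses as straightforward counting.

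The genuine problem is your plan to obtain the $\alpha^*$ and $\alpha^*_2$ lines by \textit{sp}-duality: within this paper's development that is circular. The lemma asserting $\tau(H)=\tau_2(G)$, $\alpha(H)=\alpha^*_2(G)$, $\alpha^*(H)=\alpha_2(G)$ for the \textit{sp}-dual $H$ of $G$ is proved \emph{after} the two connection lemmas, ``using the two previous lemmas, and some induction'' --- that is, the duality correspondence is deduced from the series and parallel connection formulas, including exactly the $\alpha^*$ and $\alpha^*_2$ lines you want it to deliver; a simultaneous induction does not rescue this, since verifying the correspondence at $G = G_1 \ser G_2$ already requires knowing how $\alpha^*$ and $\alpha^*_2$ behave under series connection. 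To make your route sound you would have to establish the duality correspondence independently, for instance via the plane duality of $G+s_Gt_G$ and $H+s_Ht_H$ mentioned in the paper, together with the observation that $G$ is then the plane dual of $H$ with its two terminals identified and that totally cyclic orientations of that identification correspond to almost totally cyclic orientations of $H$; this is doable but costs more than it saves. The economical fix is the paper's: prove the last two lines as ``mild variants'' of the $\alpha_2$ count, applied to almost totally cyclic orientations --- an orientation of $G_1 \ser G_2$ is almost totally cyclic if and only if both restrictions are and, whenever a restriction is not itself totally cyclic and so genuinely needs a terminal-to-terminal path, those paths are consistently aligned; the same reversal involution shows exactly half of the $(\alpha^*_2(G_1)-\alpha^*(G_1))(\alpha^*_2(G_2)-\alpha^*(G_2))$ pairs in which neither restriction is totally cyclic fail to align, yielding the stated formula.
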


\begin{proof}
The arguments are straightforward counting arguments, and so we just give one example, namely the count for $\alpha_2(G)$. We first count the number of acyclic orientations of $G_1 \ser G_2$ that {\em do} admit a directed path between the terminals, noting that such any such path must either run source-to-sink or sink-to-source but {\em not both}. The restriction of such an acyclic orientation to $G_1$ must admit a directed path between its terminals, so there are $\alpha(G_1)-\alpha_2(G_1)$ such acyclic orientations and similarly for $G_2$. Exactly half of the resulting $(\alpha(G_1)-\alpha_2(G_1))(\alpha(G_2)-\alpha_2(G_2))$ combinations have the paths aligned consistently thereby creating a directed path between the terminals of $G_1 \ser G_2$. Subtracting this number from the total number of acyclic orientations of $G_1 \ser G_2$ gives the stated result. The other arguments are mild variants of this.
\end{proof}

\begin{lemma}
Suppose that $G = G_1 \pll G_2$. Then
\begin{align*}  
\tau(G)&=\tau(G_1)\tau_2(G_2)+\tau_2(G_1)\tau(G_2),\\ 
\tau_2(G) &= \tau_2(G_1)\tau_2(G_2),\\
\alpha(G)&= \alpha(G_1)\alpha(G_2)- \frac{(\alpha(G_1)-\alpha_2(G_1))(\alpha(G_2)-\alpha_2(G_2))}2,\\
\alpha_2(G)&=\alpha_2(G_1)\alpha_2(G_2),\\
\alpha^*_2(G)&=\alpha^*_2(G_1)\alpha^*_2(G_2),\\ 
\alpha^*(G)&= \alpha^*_2(G_1)\alpha^*_2(G_2)- \frac{(\alpha^*_2(G_1)-\alpha^*(G_1))(\alpha^*_2(G_2)-\alpha^*(G_2))}2.
\end{align*}
\end{lemma}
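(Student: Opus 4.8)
The plan is to prove all six identities by the device used for the series lemma: fix an orientation (respectively a subgraph) of $G = G_1 \pll G_2$ and examine its restrictions to $G_1$ and to $G_2$, which overlap only in the two terminals $s$ and $t$. Two structural facts drive everything, and I would record them first. Since $G_1$ and $G_2$ share only $s$ and $t$, any directed path between the terminals lies entirely within one of the two parts; and any simple directed cycle meeting both parts must pass through both $s$ and $t$, traversing an $s\to t$ path in one part and a $t\to s$ path in the other. Every count below is bookkeeping built on these two observations.

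Three identities are then immediate from a ``free combination'' principle. For $\tau_2$, a spanning subgraph separates $s$ from $t$ exactly when its restriction to each part does, so $\tau_2$ multiplies. For $\alpha_2$, an orientation has no directed $s$--$t$ path and no directed cycle precisely when each restriction does, so $\alpha_2$ multiplies; the analogous statement for almost totally cyclic orientations gives the product formula for $\alpha^*_2$. The formula for $\tau(G)$ is almost as quick: a spanning tree of $G$ restricts to an acyclic spanning structure on each part whose union is connected and acyclic, which forces exactly one part to be a spanning tree (joining $s$ to $t$) and the other a $2$-forest (separating them), giving the stated cross term.

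The substance lies in the two subtraction identities, and I would write out the one for $\alpha(G)$ in full as the representative case, just as the series proof wrote out $\alpha_2$. Since an acyclic orientation of a part cannot contain directed paths in both directions between $s$ and $t$, the acyclic orientations of $G_i$ split into three classes: those with no directed $s$--$t$ path, counted by $\alpha_2(G_i)$, and those realising a directed $s\to t$ path or a directed $t\to s$ path, which by the $s\leftrightarrow t$ symmetry each number $(\alpha(G_i)-\alpha_2(G_i))/2$. A pair of acyclic orientations combines to an acyclic orientation of $G$ unless the two parts realise opposite directions, creating a cross cycle through $s$ and $t$; the number of forbidden pairs is $2\cdot\tfrac12(\alpha(G_1)-\alpha_2(G_1))\cdot\tfrac12(\alpha(G_2)-\alpha_2(G_2))$, and subtracting this from $\alpha(G_1)\alpha(G_2)$ yields the claimed expression.

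The hard part will be the totally cyclic count $\alpha^*(G)$. Here both restrictions must be almost totally cyclic, and I would classify those orientations of each part dually to the acyclic case: the genuinely totally cyclic ones, counted by $\alpha^*(G_i)$, and those whose non-cyclic edges lie on $s\to t$ paths only, or on $t\to s$ paths only, each class numbering $(\alpha^*_2(G_i)-\alpha^*(G_i))/2$. The delicate point is deciding, for a given pair, whether every edge ends up on a directed cycle of $G$, since a part's non-cyclic edges close into cycles only when the other part supplies a returning directed path. The observation that resolves this cleanly---and which I expect to be the crux---is that a totally cyclic orientation of a connected graph is strongly connected, so a totally cyclic part automatically offers paths in both directions. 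Granting this, only the pairs in which both parts are of the same one-directional type fail to be totally cyclic, and counting these forbidden pairs reproduces the subtraction formula for $\alpha^*(G)$. As a sanity check I would confirm that all six identities are exactly the $sp$-duals of the series identities under the correspondence $\tau\leftrightarrow\tau_2$, $\alpha\leftrightarrow\alpha^*_2$, $\alpha_2\leftrightarrow\alpha^*$, which interchanges series and parallel connections.
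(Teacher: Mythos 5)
Your proposal is correct and is essentially the paper's own approach: the paper disposes of this lemma with ``straightforward counting arguments,'' mild variants of the representative $\alpha_2$ count worked out for the series lemma, and your case analysis (splitting orientations of each part by the direction of terminal paths, with the reversal involution giving the factor $\tfrac12$, and the observation that a connected graph with a totally cyclic orientation is strongly connected to settle the $\alpha^*$ case) is exactly the bookkeeping the paper leaves to the reader. You simply supply the details---including the trickiest case, $\alpha^*(G)$---that the paper omits.
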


\begin{proof}
Straightforward counting arguments.
\end{proof}

Using the two previous lemmas, and some induction, we can immediately determine the relationship between the parameters of a TTSP graph
and its \textit{sp}-dual.
\begin{lemma}
If $H$ is the \textit{sp}-dual of $G$, then 
\[\tau(H)=\tau_2(G), \qquad \alpha(H)=\alpha^*_2(G), \qquad \alpha^*(H)=\alpha_2(G). \] \qed
\end{lemma}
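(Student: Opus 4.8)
The plan is to prove a stronger, self-contained statement by induction on the decomposition tree. The three claimed identities are not closed under the inductive step: for instance, the parallel-connection formula for $\tau(H)$ involves $\tau_2$ of the two factors as well as $\tau$, so the inductive hypothesis as stated does not supply enough information. I would therefore prove the full set of six dual relations
\begin{align*}
\tau(H) &= \tau_2(G), & \tau_2(H) &= \tau(G),\\
\alpha(H) &= \alpha^*_2(G), & \alpha^*_2(H) &= \alpha(G),\\
\alpha_2(H) &= \alpha^*(G), & \alpha^*(H) &= \alpha_2(G),
\end{align*}
which group the six parameters into three dual pairs $(\tau,\tau_2)$, $(\alpha,\alpha^*_2)$ and $(\alpha_2,\alpha^*)$. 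The stated lemma is precisely the left-hand column.

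For the base case, $G = K_2$ is a single edge whose decomposition tree is a single leaf, so its \textit{sp}-dual is again $K_2$. A direct count gives $\tau(K_2) = \tau_2(K_2) = 1$, $\alpha(K_2) = \alpha^*_2(K_2) = 2$ and $\alpha_2(K_2) = \alpha^*(K_2) = 0$: the single edge is a bridge, so it lies in no directed cycle and admits no very acyclic orientation, while each of its two orientations places it on a directed terminal-to-terminal path. All six identities hold trivially.

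For the inductive step, suppose $G = G_1 \ser G_2$; the parallel case is entirely symmetric under the interchange of series and parallel that defines \textit{sp}-duality, since the two composition lemmas are dual to one another under exactly the pairing above. By definition $H = G^{*_\textit{sp}} = H_1 \pll H_2$, where $H_i = G_i^{*_\textit{sp}}$, and the inductive hypothesis supplies all six dual relations for each pair $(G_i, H_i)$. I would then expand each parameter of $H = H_1 \pll H_2$ using the parallel-connection lemma, substitute the inductive hypotheses, and check that the result coincides term-by-term with the series-connection formula for the dual parameter of $G$. For example,
\[
\tau(H) = \tau(H_1)\tau_2(H_2) + \tau_2(H_1)\tau(H_2) = \tau_2(G_1)\tau(G_2) + \tau(G_1)\tau_2(G_2) = \tau_2(G),
\]
and, writing out the almost-totally-cyclic count,
\[
\alpha^*(H) = \alpha^*_2(H_1)\alpha^*_2(H_2) - \tfrac12\bigl(\alpha^*_2(H_1) - \alpha^*(H_1)\bigr)\bigl(\alpha^*_2(H_2) - \alpha^*(H_2)\bigr),
\]
which, after substituting $\alpha^*_2(H_i) = \alpha(G_i)$ and $\alpha^*(H_i) = \alpha_2(G_i)$, becomes exactly the series-connection expression for $\alpha_2(G)$. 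The remaining four identities are verified in the same one-line fashion.

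The only genuine obstacle is the very first step: recognising that the three-identity statement must be enlarged to the six-identity statement, with the correct dual pairing of parameters, so that the induction can close. Once this pairing is identified, each of the six verifications is a single substitution into the composition lemmas, and the symmetry of \textit{sp}-duality halves the work by reducing us to the case $G = G_1 \ser G_2$.
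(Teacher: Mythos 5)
Your proposal is correct and takes essentially the same route as the paper, which states the lemma follows ``using the two previous lemmas, and some induction'' and omits the details: induction on the decomposition tree, with the parallel-connection formulas for $H$ transforming term-by-term into the series-connection formulas for $G$ (and vice versa) under the pairing $(\tau,\tau_2)$, $(\alpha,\alpha^*_2)$, $(\alpha_2,\alpha^*)$. Your strengthening to six identities is the right way to make the induction close, though it is worth noting it is automatic from the three-identity statement because \textit{sp}-duality is an involution, so applying the stated lemma to the dual pair $(H,G)$ already yields the other column.
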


\section{Replaceability and Reducibility}

A TTSP graph $G$ is \emph{replaceable} by a TTSP graph $H$ if for any TTSP graph $K$, $G\oplus_P K$ satisfies Conjecture~\ref{conj:CMmult} whenever $H\oplus_P K$ satisfies Conjecture~\ref{conj:CMmult}. We say that a TTSP graph is \emph{reducible} if it is replaceable by one with fewer edges. TTSP graphs which are not reducible are called \emph{irreducible}.

\begin{lemma}\label{lem:replace}
If the graph $G$ is replaceable by $H$, then for any $G'$, the TTSP graph $G \pll G'$ is replaceable by $H \pll G'$ and the TTSP graph $G \ser G'$ is replaceable
by $H \ser G'$.
\end{lemma}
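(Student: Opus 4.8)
The plan is to handle both claims through a single device. For fixed $G'$ and $K$ I will exhibit a TTSP graph $J$, depending only on $G'$ and $K$, with the property that the underlying graph of $(G \pll G')\pll K$ (in the parallel case), or of $(G \ser G')\pll K$ (in the series case), is isomorphic to that of $G \pll J$; crucially, the same $J$ works with $H$ in place of $G$. Since whether a graph satisfies Conjecture~\ref{conj:CMmult} depends only on its underlying graph, the argument then runs as follows: fix $K$; if the $H$-graph satisfies Conjecture~\ref{conj:CMmult}, then so does $H \pll J$; replaceability of $G$ by $H$, applied with context $J$, yields that $G \pll J$ satisfies it; and $G \pll J$ has the same underlying graph as the corresponding $G$-graph, so that too satisfies Conjecture~\ref{conj:CMmult}. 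As $K$ ranges over all TTSP graphs, this is precisely the assertion that $G \pll G'$ is replaceable by $H \pll G'$, respectively that $G \ser G'$ is replaceable by $H \ser G'$.

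For the parallel case the graph $J$ is immediate from associativity of parallel connection: $(G \pll G')\pll K = G \pll (G' \pll K)$, so I may take $J = G' \pll K$, which is plainly TTSP. No work beyond the boxed device above is needed.

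The series case is the substantive one, and it is where replaceability---defined only through parallel contexts---must be made to interact with a series context. The right picture is a ``three-branch'' graph on three distinguished vertices: let $s$ be the source, $t$ the sink, and $m$ the vertex identified when forming $G \ser G'$. Then $(G \ser G')\pll K$ is exactly the graph carrying branch $G$ between $s$ and $m$, branch $G'$ between $m$ and $t$, and branch $K$ between $s$ and $t$. This description is symmetric in which branch is regarded as being ``in parallel with the rest'': pulling out $G$ rather than $K$ exhibits the very same graph as $G \pll J$, where $J$ is the series connection of $K$ and $G'$ sharing the vertex $t$. Concretely $J = K \ser \overline{G'}$, where $\overline{G'}$ denotes $G'$ with its source and sink interchanged, so that the sink of $K$ is identified with the sink $t$ of $G'$ and the free terminals of $J$ are $s$ and $m$. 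One checks directly that $J$ is TTSP and that $G \pll J$ and $(G \ser G')\pll K$ have the same underlying graph, with identical dependence on the substituted branch $G$.

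The main obstacle is exactly this series identity. The parallel case is pure associativity, but since replaceability is defined solely in terms of parallel connection, a series context must first be re-expressed as a parallel one, which the three-branch regrouping accomplishes. The only point genuinely requiring care is the terminal-reversal in forming $\overline{G'}$: it is what guarantees that $K$ and $G'$ are glued along the shared vertex $t$ rather than along $m$, and hence that the isomorphism is with $G \pll J$ and not some other graph. Everything else---that isomorphic underlying graphs have the same status with respect to Conjecture~\ref{conj:CMmult}, and the single application of the replaceability hypothesis with context $J$---is formal.
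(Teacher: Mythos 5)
Your proposal is correct and takes essentially the same route as the paper: the parallel case is the same associativity argument, and your three-branch regrouping $(G \ser G') \pll K \cong G \pll (K \ser \overline{G'})$, with $\overline{G'}$ the terminal-reversed $G'$, is exactly the paper's isomorphism with its reverse graph $r(G')$. Like the paper, you correctly note that this isomorphism holds only at the level of underlying graphs, not two-terminal graphs, which suffices because Conjecture~\ref{conj:CMmult} concerns only the underlying graph.
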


\begin{proof}
First, we suppose that $(H \pll G') \pll K$ satisfies Conjecture~\ref{conj:CMmult}. Then as $(H \pll G') \pll K = H \pll (G' \pll K)$ and $G$ is replaceable by $H$, it follows that $G \pll (G' \pll K) = (G \pll G') \pll K$ also satisfies Conjecture~\ref{conj:CMmult}. Therefore $G \pll G'$ is replaceable by $H \pll G'$.

If $G = (G,s,t)$ is a 2-terminal graph, then temporarily let $r(G)$ denote the {\em reverse} 2-terminal graph $(G,t,s)$ where the roles of the source and sink have been reversed. 

Next we suppose that $(H \ser G') \pll K$ satisfies Conjecture~\ref{conj:CMmult}. Now $(H \ser G') \pll K$ is isomorphic
(as a graph, but not as a TTSP) to $H \pll (K \ser r(G'))$ and so $G \pll (K \ser r(G'))$ satisfies Conjecture~\ref{conj:CMmult}. But the latter is isomorphic to $(G \ser G') \pll K$ and so we have shown that $G \ser G'$ is replaceable
by $H \ser G'$. 
\end{proof}

We say that a TTSP graph $(G,s,t)$ is \emph{extendable} if either $G$ or $G+st$ has the property that every edge lies in a parallel class of size two or three, or lies in a series class of size two or three. If a TTSP graph $G$ is {\em not} extendable,
then nor is $G \ser H$ or $G \pll H$ for any graph $H$.

\begin{lemma}\label{lem:keybit}
If $G$ is an irreducible extendable TTSP graph with at least two edges, then it is either the series connection or the parallel connection of two smaller irreducible extendable TTSP graphs. 
\end{lemma}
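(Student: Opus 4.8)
The plan is to read off a top-level decomposition of $G$ from its decomposition tree and then verify that both pieces inherit the required properties. Since $G$ is a TTSP graph with at least two edges, the root of any decomposition tree for $G$ is an internal node, hence an $s$-node or a $p$-node; the two subtrees hanging from it yield TTSP graphs $G_1$ and $G_2$ with either $G = G_1 \ser G_2$ or $G = G_1 \pll G_2$. Each $G_i$ contains at least one edge, and the edge sets of $G_1$ and $G_2$ partition $E(G)$, so each $G_i$ has strictly fewer edges than $G$. It therefore remains only to show that, for this split, both $G_1$ and $G_2$ are extendable and irreducible; the statement then follows immediately.

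For the first piece the arguments are short. If $G_1$ were not extendable, then by the remark following the definition of extendability neither $G_1 \ser G_2$ nor $G_1 \pll G_2$ would be extendable, contradicting the extendability of $G$; hence $G_1$ is extendable. Likewise, if $G_1$ were reducible, say replaceable by a TTSP graph $H_1$ with fewer edges, then Lemma~\ref{lem:replace} would make $G_1 \ser G_2$ replaceable by $H_1 \ser G_2$ (respectively $G_1 \pll G_2$ by $H_1 \pll G_2$); since $\abs{E(H_1)} < \abs{E(G_1)}$, this replacement has fewer edges than $G$, so $G$ would be reducible, a contradiction. Thus $G_1$ is also irreducible.

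The one genuine obstacle is that both the remark on extendability and Lemma~\ref{lem:replace} are phrased in terms of the \emph{first} argument of a connection, so they do not apply directly to the second piece $G_2$. For a parallel connection this is harmless, since parallel connection is commutative and $G_1 \pll G_2 = G_2 \pll G_1$. For a series connection I would instead use the reversal operation $r(\cdot)$ from the proof of Lemma~\ref{lem:replace}, together with the identity $r(G_1 \ser G_2) = r(G_2) \ser r(G_1)$. Two reversal invariances make this work. First, extendability is invariant under reversing the terminals, because its definition refers only to the underlying graph $G$ and to $G+st$, neither of which depends on which terminal is the source. Second, replaceability is reversal-invariant: the underlying graphs of $r(G) \pll K$ and of $G \pll r(K)$ coincide, and since Conjecture~\ref{conj:CMmult} depends only on the underlying graph while $r(K)$ ranges over all TTSP graphs as $K$ does, $G$ is replaceable by $H$ precisely when $r(G)$ is replaceable by $r(H)$. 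Granting these, non-extendability or reducibility of $G_2$ transfers to $r(G_2)$, then via the first-argument results to $r(G_2) \ser r(G_1) = r(G)$, and finally back to $G$, giving the same contradictions as for $G_1$. Verifying these two invariances cleanly is where the real work lies; everything else is immediate from the structural lemmas already proved.
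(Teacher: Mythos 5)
Your proof is correct and takes essentially the same approach as the paper: split $G$ at the root of a decomposition tree into $G = G_1 \ser G_2$ or $G = G_1 \pll G_2$ and show both pieces inherit extendability (via the remark following the definition) and irreducibility (via Lemma~\ref{lem:replace}). The paper's proof simply asserts these inheritances --- ``It is clear that $G_1$ and $G_2$ are extendable, and by Lemma~\ref{lem:replace} they are irreducible'' --- while you additionally, and correctly, patch the first-argument asymmetry of those statements for $G_2$ using commutativity of $\pll$ and the reversal operation $r(\cdot)$, the same device the paper itself deploys inside the proof of Lemma~\ref{lem:replace} but leaves implicit here.
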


\begin{proof}
As $G$ has at least two edges, then there are TTSP graphs $G_1$, $G_2$ such that either $G = G_1 \ser G_2$ or $G = G_1 \pll G_2$. It is clear that $G_1$ and $G_2$ are extendable, and by Lemma~\ref{lem:replace} they are irreducible.
\end{proof}

The proof that series-parallel graphs satisfy the Merino-Welsh conjectures hinges on showing that certain specific graphs are reducible, which requires demonstrating that it is replaceable. The next lemma gives a sufficient condition for this

\begin{lemma}\label{showreducible}
 Let $G$ be a TTSP graph. If there is a TTSP graph $H$
 \begin{multline}\label{minmax} \left( \max \{\tau(G)/\tau(H), \tau_2(G)/\tau_2(H)\} \right)^2\\
\leq \min\{\alpha(G)/\alpha(H),\alpha_2(G)/\alpha_2(H)\} \cdot \min \{\alpha^*(G)/\alpha^*(H),\alpha^*_2(G)/\alpha^*_2(H)\},\end{multline}
then $G$ is replaceable by $H$. (Here it is assumed that if $\alpha_2(H) = 0$ then the term $\alpha_2(G)/\alpha_2(H)$ is ignored, and similarly if $\alpha^*(H) = 0$.)
\end{lemma}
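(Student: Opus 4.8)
The plan is to show that whenever $H \pll K$ satisfies the multiplicative conjecture, so does $G \pll K$, by exploiting the product-and-subtract formulas for parallel connection together with Lemma~\ref{lem:disc}. The key observation is that $G$ and $H$ enter the parameters of $G \pll K$ and $H \pll K$ only through the six quantities $\tau, \tau_2, \alpha, \alpha_2, \alpha^*, \alpha_2^*$. From the parallel-connection lemma, $\alpha_2(G \pll K) = \alpha_2(G)\alpha_2(K)$ and $\alpha_2^*(G \pll K) = \alpha_2^*(G)\alpha_2^*(K)$ are simple products, while $\tau$ and $\alpha$ and $\alpha^*$ are bilinear expressions mixing the parameters of $G$ with those of $K$. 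So I would first write out, using Lemma~\ref{lem:disc}, that $G \pll K$ satisfies Conjecture~\ref{conj:CMmult} if and only if for every $\lambda \in \reals$ the quadratic $\alpha(G \pll K)\lambda^2 - 2\tau(G \pll K)\lambda + \alpha^*(G \pll K) \geq 0$.

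Next I would substitute the parallel-connection formulas and try to dominate the $G$-quadratic by the $H$-quadratic term by term. The cleanest route is to bound each coefficient: the hypothesis \eqref{minmax} says that $\tau(G) \leq c\,\tau(H)$ and $\tau_2(G) \leq c\,\tau_2(H)$ where $c^2$ is at most the product of the two minima, while $\alpha(G) \geq a\,\alpha(H)$, $\alpha_2(G) \geq a\,\alpha_2(H)$ with $a = \min\{\alpha(G)/\alpha(H), \alpha_2(G)/\alpha_2(H)\}$, and similarly $\alpha^*(G) \geq b\,\alpha^*(H)$, $\alpha_2^*(G) \geq b\,\alpha_2^*(H)$ with $b$ the corresponding $\alpha^*$-minimum; the hypothesis is precisely $c^2 \leq ab$. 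The idea is that replacing $G$ by $H$ scales the ``$\alpha$-type'' parameters down by at least $a$, the ``$\alpha^*$-type'' parameters down by at least $b$, and the ``$\tau$-type'' parameters up by at most $c$. Because the parallel-connection formula for $\alpha(G \pll K)$ is increasing in both $\alpha(G)$ and $\alpha_2(G)$, and that for $\tau(G \pll K)$ is increasing in $\tau(G)$ and $\tau_2(G)$, these monotone dependences let me compare the quadratics coefficient-by-coefficient after an appropriate rescaling of $\lambda$.

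The main obstacle is the subtraction in the formula $\alpha(G \pll K) = \alpha(G)\alpha(K) - \tfrac{1}{2}(\alpha(G)-\alpha_2(G))(\alpha(K)-\alpha_2(K))$, which is not a monotone product, so naive coefficient bounds can fail. The way around this is to treat $\alpha(\cdot)$ and $\alpha_2(\cdot)$ as a pair and verify that the combination $\alpha(G \pll K)$ is monotone increasing in $\alpha(G)$ and in $\alpha_2(G)$ separately, given the structural inequality $\alpha_2 \le \alpha$ (and dually $\alpha^* \le \alpha_2^*$), which holds because every very acyclic orientation is acyclic. Granting these monotonicities, I would argue as follows: scale $\lambda$ by $\sqrt{b/a}$ so the outer coefficients become $\alpha(G \pll K)\cdot(b/a)$ and $\alpha^*(G \pll K)$, both of which dominate $a$ (resp. $b$) times the corresponding $H$-quantities by monotonicity, while the middle coefficient $\tau(G \pll K)\sqrt{b/a}$ is bounded above using $c \le \sqrt{ab}$.

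Concretely, the plan concludes by showing
\begin{align*}
\alpha(G \pll K)\,\alpha^*(G \pll K) &\geq ab\,\alpha(H \pll K)\,\alpha^*(H \pll K)\\
&\geq ab\,\tau(H \pll K)^2 \geq c^2\,\tau(H \pll K)^2 \geq \tau(G \pll K)^2,
\end{align*}
where the first inequality uses the monotonicity of the parallel formulas in the scaled-down parameters, the second is the hypothesis that $H \pll K$ satisfies the conjecture, the third is $c^2 \le ab$, and the last uses $\tau(G \pll K) \le c\,\tau(H \pll K)$. Finally, since replaceability is defined via parallel connection only, no separate series argument is needed; the parenthetical conventions about $\alpha_2(H)=0$ and $\alpha^*(H)=0$ are handled by simply dropping the degenerate ratios, as the corresponding products vanish and the inequalities degenerate harmlessly.
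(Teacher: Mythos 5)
Your proposal is correct and is essentially the paper's own argument: the paper likewise rewrites the parallel-connection formulas as linear combinations of $\tau(H),\tau_2(H)$, of $\alpha(H),\alpha_2(H)$, and of $\alpha^*(H),\alpha_2^*(H)$ with nonnegative coefficients depending only on $K$ (nonnegative precisely because $\alpha_2\le\alpha$ and $\alpha^*\le\alpha_2^*$, resolving the subtraction issue you flag), and then concludes via the same chain $\alpha(G\pll K)\alpha^*(G\pll K)\ge ab\,\alpha(H\pll K)\alpha^*(H\pll K)\ge ab\,\tau(H\pll K)^2\ge c^2\,\tau(H\pll K)^2\ge\tau(G\pll K)^2$, with the degenerate ratios dropped exactly as you describe. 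Your detour through Lemma~\ref{lem:disc} and the $\lambda$-rescaling is unnecessary but harmless, since your concluding inequality chain stands on its own.
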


\begin{proof}
Recall that 
\begin{align*}
 \tau (H \pll K) &= \tau(H)\tau_2(K) + \tau_2(H)\tau(K),\\
 \alpha(H\pll K)&= \alpha(H)(\alpha(K)+\alpha_2(K))/2 + \alpha_2(H)(\alpha(K)-\alpha_2(K))/2,\\
 \alpha^*(H\pll K)&= \alpha^*(H)(\alpha^*_2(K)-\alpha^*(K))/2 + \alpha^*_2(H)(\alpha^*(K)+\alpha^*_2(K))/2.
\end{align*}
In other words, $\tau(H \pll K)$ is a linear combination of $\tau(H)$ and $\tau_2(H)$ whose coefficients depend on $K$ and $\tau(G \pll K)$ is the {\em same linear combination} with $\tau(G)$ and $\tau_2(G)$ replaced by $\tau(H)$ and $\tau_2(H)$; the same is true for the expressions for $\alpha$ and $\alpha^*$.

So suppose that $G$ and $H$ satisfy \eqref{minmax} and that $H \pll K$ satisfies the multiplicative Merino-Welsh conjecture. Then  
\begin{equation}\label{master}
( \tau(H) t_1 +  \tau_2(H) t_2 )^2 \leq ( \alpha(H) a_1+  \alpha_2(H)a_2) ( \alpha^*(H) c_1+  \alpha_2^*(H)c_2 )
\end{equation}
where $t_1$, $t_2$, $a_1$, $a_2$, $c_1$, $c_2$ are the coefficients depending on $K$. Replacing $H$ by $G$, the
three terms of this expression are changed as follows
\begin{align*}
(\tau(H) t_1 + \tau_2(H) t_2)^2 &\rightarrow (\tau(G) t_1 + \tau_2(G) t_2) ^2\\
(\alpha(H) a_1 + \alpha_2(H)a_2) &\rightarrow (\alpha(G) a_1 + \alpha_2(G) a_2) \\
(\alpha^*(H) c_1 + \alpha^*_2(H) c_2) &\rightarrow (\alpha^*(G) c_1 + \alpha_2^*(G) c_2) 
\end{align*}
Therefore, when $H$ is replaced by $G$, the left-hand side of \eqref{master} is multiplied by {\em at most} 
 \begin{equation}\label{lhs}
 \left( \max \{\tau(G)/\tau(H), \tau_2(G)/\tau_2(H)\} \right)^2
 \end{equation}
 while the right-hand side is multiplied by {\em at least}
 \begin{equation}\label{rhs}
 \min\{\alpha(G)/\alpha(H),\alpha_2(G)/\alpha_2(H)\} \cdot \min \{\alpha^*(G)/\alpha^*(H),\alpha^*_2(G)/\alpha^*_2(H)\},
 \end{equation}
 where any terms involving zero denominators are omitted.  By the hypotheses of the lemma, the expression \eqref{lhs} is  at most equal to the expression \eqref{rhs} and therefore
 $G \pll K$ satisfies the multiplicative Merino-Welsh conjecture. \end{proof}

It is straightforward to systematically construct all series-parallel graphs ordered by increasing number of edges---start with $K_2$, and at each stage form the series connection and parallel connection of one of the not-yet-processed pairs of graphs with the smallest total number of edges, adding the newly-constructed graphs to the growing list.  If a reducible graph $G$ is constructed during this process, then by Lemma~\ref{lem:replace}, any further graphs constructed using $G$ are also reducible. As previously noted, if a non-extendable graph is produced during this process, then any further graphs constructed using this are also non-extendable.  Therefore a modified procedure that immediately discards any graphs that are either non-extendable or that can be shown by Lemma~\ref{showreducible} to be reducible will produce a list of graphs that still contains all the extendable irreducible graphs (and perhaps some others). The surprise is that this modified procedure terminates, and indeed terminates quite quickly:

\begin{table}
\begin{tabular}{c|ccccccccc}
No.& Edges & \textit{sp}-dual & Built & $\tau$ & $\tau_2$ & $\alpha$ & $\alpha_2$ & $\alpha^*_2$ & $\alpha^*$\\
\hline
0 & 1 & 0 &  --- & 1 & 1 & 2 & 0 & 2 & 0 \\
1 & 2 & 2 & 0 $\oplus_P$ 0 & 2 & 1 & 2 & 0 & 4 & 2 \\
2 & 2 & 1 & 0 $\oplus_S$ 0 & 1 & 2 & 4 & 2 & 2 & 0 \\
3 & 3 & 4 & 0 $\oplus_P$ 2 & 3 & 2 & 6 & 0 & 4 & 2 \\
4 & 3 & 3 & 0 $\oplus_S$ 1 & 2 & 3 & 4 & 2 & 6 & 0 \\
5 & 3 & 6 & 0 $\oplus_P$ 1 & 3 & 1 & 2 & 0 & 8 & 6 \\
6 & 3 & 5 & 0 $\oplus_S$ 2 & 1 & 3 & 8 & 6 & 2 & 0 \\
7 & 4 & 8 & 1 $\oplus_S$ 1 & 4 & 4 & 4 & 2 & 14 & 4 \\
8 & 4 & 7 & 2 $\oplus_P$ 2 & 4 & 4 & 14 & 4 & 4 & 2 \\
9 & 4 & 10 & 1 $\oplus_P$ 2 & 5 & 2 & 6 & 0 & 8 & 6 \\
10 & 4 & 9 & 1 $\oplus_S$ 2 & 2 & 5 & 8 & 6 & 6 & 0 \\
11 & 5 & 12 & 1 $\oplus_S$ 5 & 6 & 5 & 4 & 2 & 30 & 12 \\
12 & 5 & 11 & 2 $\oplus_P$ 6 & 5 & 6 & 30 & 12 & 4 & 2 \\
13 & 6 & 14 & 2 $\oplus_P$ 8 & 12 & 8 & 46 & 8 & 8 & 6 \\
14 & 6 & 13 & 1 $\oplus_S$ 7 & 8 & 12 & 8 & 6 & 46 & 8 \\
15 & 7 & 16 & 5 $\oplus_S$ 8 & 12 & 16 & 28 & 18 & 30 & 12 \\
16 & 7 & 15 & 6 $\oplus_P$ 7 & 16 & 12 & 30 & 12 & 28 & 18 \\
17 & 7 & 18 & 2 $\oplus_P$ 12 & 16 & 12 & 102 & 24 & 8 & 6 \\
18 & 7 & 17 & 1 $\oplus_S$ 11 & 12 & 16 & 8 & 6 & 102 & 24\\
\hline
\end{tabular}
\caption{List containing all extendable irreducible graphs}
\label{tab:main}
\end{table}

\begin{table}
\begin{tabular}{c|ccccccccccccccccccc}
&0&1&2&3&4&5&6&7&8&9&10&11&12&13&14&15&16&17&18\\
\hline
0 & - & =4 & =6 & N&=10& 4  & 2  & 2  & 2  & 4  & 6  & 2  & 2  & 2  & 2  & 2  & 2  & 2  & 2  \\
1 & =5 & - & =10 & N&2& =11 & 6  & =14 & 4  & 7  & 2  & =18 & 2  & 7  & 2  & 2  & 7  & 7  & 2  \\
2 & =3 & =9 & - & N&6& 2  & 2  & 2  & 2  & 2  & 2  & 2  & 2  & 2  & 2  & 2  & 2  & 2  & 2  \\
3 & =9 & 5 & 1 &  -&N & N  & N  & N  & N  & N  & N  & N  & N  & N  & N  & N  & N  & N  & N\\
4 & N & N & N & N& - & 2  & 2  & 2  & 2  & 2  & 2  & 2  & 2  & 2  & 2  & 2  & 2  & 2  & 2 \\
5 & 1  & 1  & 5  & 1&N& - & 2  & =18 & =15 & 3  & 2  & 0  & 2  & 0  & 2  & 2  & 0  & 0  & 2  \\
6 & 3  & 1  & =12 & 1&N& 1  & - & 2  & 2  & 2  & 2  & 2  & 2  & 2  & 2  & 2  & 2  & 2  & 2  \\
7 & 1  & 1  & 3  & 1&N& 1  & =16 & - & 2  & 4  & 2  & 2  & 2  & 2  & 2  & 2  & 2  & 2  & 2  \\
8 & 1  & 1  & =13 & 1&N& 1  & =17 & 1  & - & 4  & 2  & 2  & 2  & 2  & 2  & 2  & 2  & 2  & 2  \\
9 & 5  & 1  & 1  & 1&N& 1  & 1  & 1  & 1  & - & 2  & 7  & 2  & 0  & 2  & 2  & 0  & 0  & 2  \\
10 & 3  & 1  & 8  & 1&N& 1  & 4  & 3  & 3  & 1  & - & 2  & 2  & 2  & 2  & 2  & 2  & 2  & 2  \\
11 & 1  & 1  & 1  & 1&N& 1  & 1  & 1  & 1  & 1  & 1  & - & 2  & 4  & 2  & 2  & 2  & 2  & 2  \\
12 & 1  & 1  & =17 & 1&N& 1  & 0  & 1  & 1  & 1  & 8  & 1  & - & 2  & 2  & 2  & 2  & 2  & 2  \\
13 & 1  & 1  & 1  & 1&N& 1  & 1  & 1  & 1  & 1  & 1  & 1  & 1  & - & 2  & 2  & 2  & 4  & 2  \\
14 & 1  & 1  & 8  & 1&N& 1  & 0  & 1  & 1  & 1  & 0  & 1  & 3  & 1  & - & 2  & 2  & 2  & 2  \\
15 & 1  & 1  & 8  & 1&N& 1  & 0  & 1  & 1  & 1  & 0  & 1  & 1  & 1  & 1  & - & 2  & 2  & 2  \\
16 & 1  & 1  & 1  & 1&N& 1  & 1  & 1  & 1  & 1  & 1  & 1  & 1  & 1  & 1  & 1  & - & 0  & 2  \\
17 & 1  & 1  & 1  & 1&N& 1  & 1  & 1  & 1  & 1  & 1  & 1  & 1  & 1  & 1  & 1  & 1  & - & 2  \\
18 & 1  & 1  & 8  & 1&N& 1  & 0  & 1  & 1  & 1  & 0  & 1  & 1  & 1  & 3  & 0  & 1  & 1  & -\\
\hline
\end{tabular}
\caption{Each pair $G_1\ser G_2$ and $G_1 \pll G_2$ is either in Table~\ref{tab:main}, is non-extendable or is reducible}
\label{tab:combs}
\end{table}

\begin{table}
\begin{tabular}{c|ccccccccccccccccccccc}
$G$&0&1&2&3&4&5&6&7&8&9&10&11&12&13&14&15&16&17&18\\
\hline
$G\oplus_S G$ & =2  & =7  & 2  & N&2& 3  & 2  & 2  & 2  & 7  & 2  & 2  & 2  & 4  & 2  & 2  & 0  & 0  & 2  \\
$G\oplus_P G$ & =1  & 1  & =8  & 1&N& 1  & 4  & 1  & 1  & 1  & 7  & 1  & 1  & 1  & 3  & 0  & 1  & 1  & 0 \\
\hline
\end{tabular}
\caption{Each pair $G\ser G$ and $G \pll G$ is either in Table~\ref{tab:main}, is non-extendable or is reducible}
\label{tab:combs2}
\end{table}

\begin{proposition}
Every extendable TTSP graph is either listed in Table~\ref{tab:main} or is reducible.
\end{proposition}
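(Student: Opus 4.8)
The plan is to prove the proposition by strong induction on the number of edges, using Lemma~\ref{lem:keybit} to reduce the (a priori infinite) family of extendable irreducible graphs to a finite \emph{closure} condition on the list in Table~\ref{tab:main}. Since an extendable graph is either reducible (and we are done) or irreducible, it suffices to show that every extendable \emph{irreducible} TTSP graph is listed in Table~\ref{tab:main}. The single-edge graph $K_2$ (graph $0$) is the unique extendable graph with one edge and appears in the list, giving the base case. Now let $G$ be an extendable irreducible TTSP graph with at least two edges, and assume inductively that every extendable irreducible TTSP graph with strictly fewer edges is one of the graphs $0,\dots,18$. By Lemma~\ref{lem:keybit}, $G = G_1 \ser G_2$ or $G = G_1 \pll G_2$ for two smaller extendable irreducible graphs $G_1,G_2$, and by the inductive hypothesis both lie in Table~\ref{tab:main}. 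Thus $G$ is a series or parallel connection of two listed graphs, and if every such connection is either (isomorphic to) a listed graph, or non-extendable, or reducible, then the extendability and irreducibility of $G$ force $G$ itself to be listed, closing the induction.

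It remains to verify this closure property, which is the finite computation recorded in Tables~\ref{tab:combs} and~\ref{tab:combs2}. I first observe that the series-connection formulas and the parallel-connection formulas for $\tau,\tau_2,\alpha,\alpha_2,\alpha^*_2,\alpha^*$ are each symmetric under interchanging the two factors, so the full parameter vector of $G_i \ser G_j$ depends only on the unordered pair, and similarly for $G_i \pll G_j$. Hence it is enough to examine $G_i \ser G_j$ and $G_i \pll G_j$ for $0 \le i \le j \le 18$: the off-diagonal pairs are handled in Table~\ref{tab:combs} (the upper triangle recording $\ser$ and the lower triangle recording $\pll$), and the diagonal connections $G \ser G$, $G \pll G$ in Table~\ref{tab:combs2}. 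For each pair I would compute the six parameters of the connection from those of its factors via the series/parallel lemmas, reading the factor values off the parameter columns of Table~\ref{tab:main}, and then classify the outcome: an entry ``$=k$'' records that the connection has the parameter vector of graph $k$ (and is in fact isomorphic to it), so it is already in the list; an entry ``N'' records that the connection is non-extendable, whence by the remark following the definition of extendability no graph built from it can be extendable; and a numerical entry $k$ exhibits a graph $H = G_k$ with strictly fewer edges for which inequality \eqref{minmax} of Lemma~\ref{showreducible} holds, certifying that the connection is reducible (using the stated conventions when $\alpha_2(H)=0$ or $\alpha^*(H)=0$, as for $H$ equal to graph $0$).

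The main obstacle is exactly the confirmation that this process \emph{closes up}: the content of the proposition is the (not obvious) fact that forming all series and parallel connections of listed graphs never produces a new extendable irreducible graph. Concretely the work is, for each of the finitely many numerical entries, to check that the displayed inequality \eqref{minmax} genuinely holds for the claimed reducer $G_k$ (which also has fewer edges, so reducibility is witnessed), and, for each ``$=k$'' entry, to confirm the asserted coincidence of parameter vectors so that the connection is already accounted for. These are all elementary arithmetic verifications, but they must be performed for every cell of Tables~\ref{tab:combs} and~\ref{tab:combs2}, and I would carry them out mechanically. Once every cell is confirmed to fall into one of the three admissible types, the closure property holds, the induction goes through, and every extendable TTSP graph is either listed in Table~\ref{tab:main} or is reducible.
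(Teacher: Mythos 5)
Your proposal is correct and takes essentially the same approach as the paper: an induction (equivalently, minimal counterexample) on the number of edges using Lemma~\ref{lem:keybit} to decompose an extendable irreducible graph into two smaller extendable irreducible graphs from Table~\ref{tab:main}, followed by the finite closure verification recorded in Tables~\ref{tab:combs} and~\ref{tab:combs2}. Your explicit observation that the series and parallel parameter formulas are symmetric in the two factors, justifying the check over unordered pairs only, is left implicit in the paper's triangular table layout but is a correct and worthwhile remark.
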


\begin{proof}
Suppose for a contradiction that there is an extendable irreducible TTSP graph not listed in Table~\ref{tab:main}, and let $G$ be such a graph with fewest edges.  As $K_2$ is in the table, $G$ has more than one edge and so $G = G_1 \ser G_2$
or $G = G_1 \pll G_2$, where both $G_1$ and $G_2$ are extendable and irreducible. By the minimality of $G$, both 
$G_1$ and $G_2$ occur in Table~\ref{tab:main}. However it is easy to check for each pair of graphs in Table~\ref{tab:main} that both $G_1 \ser G_2$ and $G_1 \pll G_2$ are either included in Table~\ref{tab:main}, are not extendable, or can be shown to be reducible by Lemma~\ref{showreducible}.  This information is summarised in Table~\ref{tab:combs} in the following manner:  the graphs in Table~\ref{tab:main} are numbered $0$, $1$, $\ldots$, $18$ and the rows and columns of
Table~\ref{tab:combs} are indexed by these graphs. The above-diagonal entries of Table~\ref{tab:combs} give information about the  to the series connection of the corresponding graphs, while the below-diagonal entries refer to the parallel connection. Each entry is either $N$, indicating that the graph constructed is not extendable, or is a plain integer $x$ indicating that the graph constructed is reducible because it can be replaced by the smaller graph $x$ (using Lemma~\ref{showreducible}), or an integer preceded by an equals sign  =$x$ indicating that the graph is isomorphic to graph $x$, and hence in Table~\ref{tab:main}.  Table~\ref{tab:combs2} gives the same information for the series connection and parallel connection when the two components
are isomorphic.
\end{proof}

\begin{theorem}
If $G$ is a series--parallel graph without loops or bridges, then
\begin{equation}\label{eq:main} \alpha(G)\alpha^*(G) \geq \tau(G)^2.\end{equation}
\end{theorem}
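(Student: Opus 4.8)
The plan is to argue by minimal counterexample, feeding the representation of a $2$-connected series--parallel graph as a parallel connection with a single edge into the replaceability machinery together with the finite list of Table~\ref{tab:main}. So suppose the theorem fails, and let $G$ be a loopless, bridgeless series--parallel graph with the fewest edges for which $\alpha(G)\alpha^*(G) < \tau(G)^2$.

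First I would reduce to the $2$-connected case. Since $\tau$, $\alpha$ and $\alpha^*$ are evaluations of the Tutte polynomial, they factorise over the blocks of $G$, which (as $G$ is loopless and bridgeless) are themselves $2$-connected series--parallel graphs. If $G$ were not $2$-connected, then each block would have fewer edges and hence satisfy \eqref{eq:main} by minimality; multiplying these inequalities would give \eqref{eq:main} for $G$, a contradiction. So $G$ is $2$-connected. Choosing any edge $e = st$, Lemma~\ref{lem:2terminal} shows that $G_0 := (\gde, s, t)$ is a TTSP graph, and as $e$ is the single edge joining $s$ to $t$ we have $G = G_0 \pll K_2$, where $K_2$ is the single-edge graph (graph $0$ of Table~\ref{tab:main}).

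Next I would establish that $G_0$ is extendable and apply the preceding proposition. Lemmas~\ref{lem:parser}, \ref{lem:par} and~\ref{lem:ser} combine with minimality to force every edge of $G$ into a series or parallel class of size $2$ or $3$: an edge with trivial series and parallel classes could be safely deleted and contracted, yielding two smaller loopless bridgeless series--parallel graphs that satisfy \eqref{eq:main} by minimality, whence $G$ would too by Lemma~\ref{lem:parser}; and an edge lying in a class of size at least $4$ would exhibit $G$ as the output of Lemma~\ref{lem:par} or~\ref{lem:ser} applied to a smaller graph satisfying \eqref{eq:main} by minimality. Since $G = G_0 + st$ enjoys this small-class property, the TTSP graph $G_0$ is extendable by definition, so by the proposition it is either reducible or one of the nineteen graphs of Table~\ref{tab:main}.

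Finally I would dispose of both cases. If $G_0$ is reducible it is replaceable by a TTSP graph $H$ with fewer edges; taking $K = K_2$ in the definition of replaceability, $G = G_0 \pll K_2$ satisfies \eqref{eq:main} whenever $H \pll K_2$ does, and $H \pll K_2 = H + st$ is a loopless, bridgeless, $2$-connected series--parallel graph with fewer edges than $G$, hence satisfies \eqref{eq:main} by minimality --- a contradiction. If instead $G_0$ occurs in Table~\ref{tab:main}, then the parallel-connection formulas specialised to $K_2$ give $\tau(G) = \tau(G_0) + \tau_2(G_0)$, $\alpha(G) = \alpha(G_0) + \alpha_2(G_0)$ and $\alpha^*(G) = \alpha^*(G_0) + \alpha^*_2(G_0)$, so a direct check of the nineteen rows verifies $\alpha(G)\alpha^*(G) \geq \tau(G)^2$ in every case, again contradicting the choice of $G$. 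The conceptual heart of the argument lies in the proposition (the enumeration underlying Tables~\ref{tab:combs} and~\ref{tab:combs2}); what remains delicate here is the orchestration of the induction --- in particular, verifying that safe deletion/contraction together with Lemmas~\ref{lem:par} and~\ref{lem:ser} really do pin down the small-class property, and that the auxiliary graph $H \pll K_2$ stays inside the class of loopless bridgeless series--parallel graphs so that minimality applies to it.
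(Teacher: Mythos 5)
Your proposal is correct and takes essentially the same route as the paper's proof: minimal counterexample, reduction to the $2$-connected case via multiplicativity over blocks, Lemma~\ref{lem:2terminal} to view $G$ as $\gde \pll K_2$, the class-size-$2$-or-$3$ property from Lemmas~\ref{lem:parser}, \ref{lem:par} and~\ref{lem:ser}, extendability of $\gde$, and then the Proposition with Table~\ref{tab:main}. The only differences are presentational: you unpack explicitly the step the paper compresses into ``$\gde$ is irreducible by minimality'' (namely that a replacement $H$ yields the smaller graph $H \pll K_2$, which must lie in the loopless bridgeless series--parallel class for minimality to apply), and your verification of the table case via the parallel-connection formulas specialised to $K_2$ is exactly the paper's ``add an edge between the terminals'' check.
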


\begin{proof}
Suppose for a contradiction that $G$ is a series--parallel graph not satisfying Equation~\eqref{eq:main}, and that among all such graphs $G$ has the fewest edges. 
Then $G$ is 2-connected, because each of $\alpha$, $\alpha^*$ and $\tau$ is multiplicative over blocks.
Let $e$ be an edge of $G$ with endvertices $u$ and $v$. Then, by Lemma~\ref{lem:2terminal}, $(\gde, u, v)$ is a two-terminal series-parallel graph. By Lemmas~\ref{lem:par} and \ref{lem:ser}, each edge of $G$ lies in a series class of size two or three, or in a parallel class of size two or three. Consequently every edge of $\gde$ lies in a series class of size two or three, except possibly an edge joining $u$ and $v$ or an edge which is contained in every path from $u$ to $v$ in $\gde$, and therefore $\gde$ is extendable. Since $G$ is a minimal counterexample to the theorem, it follows from \ref{lem:replace} that $\gde$ is  irreducible. Therefore $\gde$ is one of the graphs listed in Table~\ref{tab:main}. However, for each of these graphs, it is easy to check that if an edge is added between the terminals, then the resulting graph satisfies Equation~\eqref{eq:main}, thereby supplying the required contradiction.
\end{proof}

\section{Conclusion}

All three of the parameters $\tau(G)$, $\alpha(G)$ and $\alpha^*(G)$ are evaluations of the Tutte polynomial $T_G(x,y)$; we have 
\begin{align*}
\tau(G) & = T_G(1,1),\\
\alpha(G) &= T_G(2,0),\\
\alpha^*(G) &= T_G(0,2).
\end{align*}
As the Tutte polynomial is naturally defined for all {\em matroids}, the conjecture can directly be extended to any matroid $M$ where it becomes 
\[
T_M(1,1) \leq \max\{T_M(2,0), T_M(0,2)\}
\]
even though there are no obvious combinatorial interpretations of $T_M(2,0)$ or $T_M(0,2)$ for general matroids. 

A matroid is called a {\em paving matroid} if it's smallest circuit has size at least equal to its rank, and it is generally presumed (though not proved) that asymptotically almost all matroids are paving matroids. Ch{\'a}vez-Lomel{\'{\i}}, Merino, Noble and Ram{\'{\i}}rez-Ib{\'a}{\~n}ez \cite{MR2764805} proved (among other results) that for a coloopless paving matroid $M$, the function $T_M(1-x,1+x)$ is {\em convex} in the region $-1 \leq x \leq 1$, thus proving that paving matroids satisfy the Merino-Welsh conjecture.

So the conjecture is proved for the vast class of paving matroids, and now for the tiny class of series-parallel graphs, but for general graphs and matroids, every variant of the conjecture remains open. While it may be possible to develop bounds for dense or sparse matroids similar to those found by Thomassen for graphs, the heart of the problem lies in the case where the rank is half the number of elements. 

\appendix

\section{Code}

This appendix contains the Mathematica code used to check the assertions made in Table~\ref{tab:combs} and Table~\ref{tab:combs2} regarding the replaceability of the series and parallel connections of each of the graphs in Table~\ref{tab:main}.  In this code, the parameters for each graph are represented as a vector 
\[
\left( \tau(G), \tau_2(G), \alpha(G), \alpha_2(G), \alpha_2^*(G), \alpha^*(G) \right)
\]
and the functions \verb+ser+ and \verb+par+ produce the parameters of the series connection and parallel connection respectively.

\begin{verbatim}


ser::usage="Returns the parameters for the series connection of g and h"

ser[g_,h_] := {
  g[[1]]h[[1]], 
  g[[1]]h[[2]]+g[[2]]h[[1]],
  g[[3]]h[[3]],
  g[[3]]h[[3]]-(g[[3]]-g[[4]])(h[[3]]-h[[4]])/2,
  g[[5]]h[[5]]-(g[[5]]-g[[6]])(h[[5]]-h[[6]])/2,
  g[[6]]h[[6]]};

par::usage="Returns the parameters for the parallel connection of g and h"

par[g_,h_] := {
  g[[1]]h[[2]]+g[[2]]h[[1]],
  g[[2]]h[[2]],
  g[[3]]h[[3]]-(g[[3]]-g[[4]])(h[[3]]-h[[4]])/2,
  g[[4]]h[[4]],
  g[[5]]h[[5]],
  g[[5]]h[[5]]-(g[[5]]-g[[6]])(h[[5]]-h[[6]])/2};

spdual::usage="Returns the parameters for the sp-dual of g"

spdual[g_] := {
 g[[2]],g[[1]],g[[5]],g[[6]],g[[3]],g[[4]]};
\end{verbatim}

\noindent
The function \verb+replaces+ is a boolean function testing whether the first argument can be
replaced by the second.

\begin{verbatim}
replaces::usage="Returns true if g can be replaced by h, else false"

replaces[g_,h_] := Module[{t1,t2,t3}, 
  t1 = Max[ g[[1]]/h[[1]], g[[2]]/h[[2]] ];
  t2 = If[h[[4]]==0, g[[3]]/h[[3]], Min [ g[[3]]/h[[3]], g[[4]]/h[[4]] ]];
  t3 = If[h[[6]]==0, g[[5]]/h[[5]], Min [ g[[5]]/h[[5]], g[[6]]/h[[6]] ]];
  t1^2-t2 t3 <= 0]
\end{verbatim}

\noindent
Finally, \verb+gs+ is the collection of all the graphs (other than $K_2$) in Table~\ref{tab:main} ordered such that \verb+gs[[x]]+ is the graph \verb+x+ in the table.
 
\begin{verbatim}
k2 = {1,1,2,0,2,0}; 
g0 = k2;
g1 = par[g0,g0]; g2 = ser[g0,g0]; 
g3 = par[g0,g2]; g4 = ser[g0,g1];
g5 = par[g0,g1]; g6 = ser[g0,g2];
g7 = ser[g1,g1]; g8 = par[g2,g2];
g9 = par[g1,g2]; g10 = ser[g1,g2];
g11 = ser[g1,g5]; g12 = par[g2,g6];
g13 = par[g2,g8]; g14 = ser[g1,g7];
g15 = ser[g5,g8]; g16 = par[g6,g7];
g17 = par[g2,g12]; g18 = ser[g1,g11];

gs = {g1,g2,g3,g4,g5,g6,g7,g8,g9,g10,g11,g12,g13,g14,g15,g16,g17,g18}
\end{verbatim}

It is now straightforward to confirm any of the assertions contained in the tables; for example, one entry in the table claims that $G_1 \ser G_6$ is replaceable by $G_6$ itself. This can easily be confirmed by ensuring that the  statement
\begin{verbatim}
replaces[ ser[ gs[[1]], gs[[6]] ], gs[[6]] ]
\end{verbatim}
returns \verb+True+, and similarly for all the other entries.

\bibliographystyle{acm2url}
\bibliography{/Users/gordon/Dropbox/gordonmaster}

\end{document}